\newtheorem{thm}{Theorem}[section]
\newtheorem{lkthm}[thm]{L\'evy--Khinchin Theorem}
\newtheorem{schurthm}[thm]{Schur Product Theorem}
\newtheorem{lem}[thm]{Lemma}
\newtheorem{prop}[thm]{Proposition}
\newtheorem{cor}[thm]{Corollary}
\theoremstyle{definition}
\newtheorem{dfn}[thm]{Definition}
\newtheorem{exm}[thm]{Example}
\theoremstyle{remark}
\newtheorem{rem}[thm]{Remark}
\newcommand{\exmsymbol}{\hfill$\circ$}
\newcommand{\nset}{\mathds{N}}
\newcommand{\pset}{\mathds{P}}
\newcommand{\rset}{\mathds{R}}
\newcommand{\diff}{\mathrm{d}}
\newcommand{\pos}{\mathrm{Pos}}
\newcommand{\supp}{\mathrm{supp}\,}
\newcommand{\id}{\mathrm{id}}
\newcommand{\one}{\mathds{1}}
\newcommand{\stirlingone}[2]{\begin{bmatrix}#1\\#2\end{bmatrix}}
\newcommand{\stirlingtwo}[2]{\begin{Bmatrix}#1\\#2\end{Bmatrix}}
\newcommand{\skl}[2]{\langle #1, #2 \rangle}
\newcommand{\cP}{\mathcal{P}}
\newcommand{\fD}{\mathfrak{D}}
\newcommand{\fd}{\mathfrak{d}}
\author{Philipp J.\ di Dio\footnote{Corresponding author, email: philipp.didio@uni-konstanz.de} and Lars-Luca Langer\footnote{Email: lars-luca.langer@uni-konstanz.de}}
\address{Department of Mathematics and Statistics, University of Konstanz, Universit\"atsstra{\ss}e 10, D-78464 Konstanz, Germany}
\address{Zukunftskolleg, Universtity of Konstanz, Universit\"atsstra{\ss}e 10, D-78464 Konstanz, Germany}
\journal{ArXiv}
\title{The Hadamard Product of Moment Sequences, Diagonal Positivity Preservers, and their Generators}
\begin{document}%%%
%%%%%%%%%%%%%%%%%%%

\begin{abstract}
In this work we investigate special aspects of positivity preservers and especially diagonal positivity preservers, i.e., linear maps $T:\rset[x_1,\dots,x_n]\to\rset[x_1,\dots,x_n]$ such that $Tx^\alpha = t_\alpha x^\alpha$ holds for all $\alpha\in\nset_0^n$ with $t_\alpha\in\rset$ and $Tp\geq 0$ on $\rset^n$ for all $p\in\rset[x_1,\dots,x_n]$ with $p\geq 0$ on $\rset^n$.
We discuss representations of $T$, give characterizations of diagonal positivity preservers, and compare these to previous (partial) results in the literature.
On the side we get a full characterization of linear maps preserving moment sequences and a new proof of Schur's product formula.
The tool of diagonal positivity preservers simplifies several other existing proofs in the literature.
We give a full characterization of generators $A$ of diagonal positivity preservers, i.e., $e^{tA}$ is a diagonal positivity preserver for all $t\geq 0$.
We give the connection of these generators to infinitely divisible moment sequences.
\end{abstract}

\begin{keyword}
moment sequence\sep moment\sep Hadamard\sep product\sep entry-wise
\MSC[2020] Primary 44A60; Secondary 30E05, 26C05.
\end{keyword}

\maketitle

\setcounter{tocdepth}{2}
\tableofcontents
%\bigskip

\noindent\rule{\textwidth}{1pt}

\section{Introduction}%%%
%%%%%%%%%%%%%%%%%%%%%%%%%

Non-negative polynomials
\[\pos(K) := \{ p\in\rset[x_1,\dots,x_n] \,|\, p\geq 0\ \text{on}\ K\}\]
with $n\in\nset$ and $K\subseteq\rset^n$ belong to the most important structures in mathematics and are well-studied \cite{schmud91,marshallPosPoly,bochnak98,prestelPosPoly,
schmudMomentBook,powersPositivityRealPolynomials,scheidererRealAlgebraicGeometry}.
The dual problem is the moment problem, i.e., the study of linear functionals $L\in\pos(K)'$ \cite{akhiezClassical,kreinMarkovMomentProblem,schmud91,lauren09,marshallPosPoly,schmudMomentBook}.
A linear functional $L:\rset[x_1,\dots,x_n]\to\rset$ is called a \emph{$K$-moment functional} if there exists a measure $\mu$ on $K\subseteq\rset^n$ closed such that $L(f) = \int_K f~\diff\mu$ holds for all $f\in\rset[x_1,\dots,x_n]$.
A sequence $s = (s_{\alpha})_{\alpha\in\nset_0^n}$ is called a \emph{$K$-moment sequence} if the \emph{Riesz functional} $L_s(x^\alpha) := s_\alpha$ is a $K$-moment functional.

The second logical step to study $\pos(K)$ is to study all (linear) maps
\[T:\rset[x_1,\dots,x_n]\to\rset[x_1,\dots,x_n],\]
especially when $K$-positivity is preserved:
\begin{equation}\label{eq:Tpospres}
T\pos(K)\subseteq\pos(K).
\end{equation}
Maps with (\ref{eq:Tpospres}) are called \emph{$K$-positivity preservers}.
$\rset^n$-positivity preservers were studied e.g.\ in \cite{guterman08,netzer10,borcea11,didio24posPresConst,didio24posPres2arxiv}.
In \cite{didio24posPres2arxiv} $K$-positivity preservers for all closed $K\subsetneq\rset^n$ are fully characterized for the first time.
In the present manuscript we deal with \emph{diagonal $\rset^n$-positivity preservers}:
\begin{equation}\label{eq:diagPosPres}
Tx^\alpha = t_\alpha x^\alpha \quad\text{for all}\ \alpha\in\nset_0^n\ \qquad\text{and}\qquad T\pos(\rset^n)\subseteq\pos(\rset^n).
\end{equation}
They are fully characterized (\Cref{thm:borceaDiag} \cite[Cor.\ 4.3]{borcea11}), they have a very special structure, and hence they have properties general positivity preservers do not possess.
Despite their special structure diagonal positivity preservers have not been studied in more detail before.
This is the main purpose of the current work.

Positive (semi-)definite sequences, moment sequences, positive kernels, and their infinitely divisibility and maps have been studied before, see e.g.\
\cite{horn69b,horn69a,tyan75,hansen88,sato99,berg04,berg05,bertoin05,berg07,boettcher13,
carcoma17,patie20,blekherman22arxiv,belton22,belton23}.
However, mostly the simpler univariate cases have been studied.
It is known for a long time that positive semi-definiteness in the multivariate moment problem is necessary but not sufficient \cite{schmud79}.
Hence, little about the multivariate cases is known.
We work here solely in the general multivariate setting and explain the difference to previous works in the following sections when appropriate.
The main difference between the current work and the previous works we cited above is that here we use the connection between $K$-moment sequences and $K$-positivitiy preservers \cite{borcea11}, especially the diagonal operator case.
This significantly simplifies arguments and gives new results.

The detailed study here reveals the following and hence the manuscript is structured as follows.
In \Cref{subsec:genNot} we give the general notations, in \Cref{subsec:addConv} we discuss the additive convolution $\mu *\nu$, and in \Cref{subsec:multConv} we give the multiplicative convolution $\mu\odot\nu$ of two measures.
In \Cref{sec:charLinMaps} we give the full characterization of linear maps $S:\rset^{\nset_0^n}\to\rset^{\nset_0^n}$ such that $K$-moment sequences are mapped to $K$-moment sequences.
In \Cref{sec:diagOp} we then start our main topic and discuss first properties and give different representations of diagonal operators.
In \Cref{sec:prodDiag} we look at the product of diagonal positivity preservers.
By (\ref{eq:diagPosPres}) the product of two diagonal positivity preservers is again a positivity preserver and corresponds to the Hadamard product of moment sequences.
With this argument we remove the usage of the \Cref{thm:schur} for the Hadamard product of moment sequences which has been used until now.
This simplifies several arguments in e.g.\ \cite[Thm.\ 3.1 and 9.1]{belton22} and \cite[Sec.\ 2.3]{blekherman22arxiv}.
The argument with diagonal operators is so strong that we prove the \Cref{thm:schur} with diagonal operators in \Cref{sec:schur}.
In \Cref{sec:infdivOdot} we deal with infinitely divisible moment sequences with respect to the Hadamard product.
In \Cref{sec:generators} we then give a full characterization of generators of diagonal positivity presevers.
We give a different approach to results from \cite{tyan75}.
In \Cref{sec:summary} we summarize our findings.

\section*{Acknowledgment}%%%
%%%%%%%%%%%%%%%%%%%%%%%%%%%%

We thank G.\ Blekherman valuable remarks and for the reference \cite{blekherman22arxiv}.
We thank C.\ Berg for providing us with the reference \cite{tyan75}.

\section{Preliminaries}%%%
%%%%%%%%%%%%%%%%%%%%%%%%%%
\label{sec:prelim}

\subsection{General Notation and Known Results}\label{subsec:genNot}

We use $\nset := \{1,2,3,\dots\}$, $\nset_0 = \{0,1,2,\dots\}$, and 
$\|x\|_2 := \sqrt{x_1^2 + \dots + x_n^2}$ for all $x=(x_1,\dots,x_n)^T\in\rset^n$ with $n\in\nset$.
$\chi_A$ denotes the characteristic function of a set $A\subseteq\rset^n$.
For two sets $A,B\subseteq\rset^n$ we define
\begin{equation}\label{eq:KmultL}
A\cdot B:=\{(a_1 b_1,\dots,a_n b_n) \,|\, (a_1,\dots,a_n)\in A,\ (b_1,\dots,b_n)\in B\}.
\end{equation}

We denote by $\delta_{i,j}$ the Kronecker delta, by $\delta_x$ the Dirac measure supported at $x\in\rset^n$, by $e_i := (\delta_{i,j})_{j=1}^n\in\rset^n$ the $i$-th standard unit vector of $\rset^n$, and by $\one := (1,\dots,1)^T\in\rset^n$.

For multi-indices $\alpha=(\alpha_1,\dots,\alpha_n)\in\nset_0^n$ we set
$\alpha! := \alpha_1!\cdots \alpha_n!$,
$|\alpha| := \alpha_1 + \dots + \alpha_n$, 
$x^\alpha := x_1^{\alpha_1}\cdots x_n^{\alpha_n}$,
$\partial^\alpha := \partial_1^{\alpha_1}\cdots\partial_n^{\alpha_n}$, and
$(x\partial)^{\alpha} := (x_1\partial_1)^{\alpha_1}\cdots (x_n\partial_n)^{\alpha_n}$.
For a real sequence $s = (s_\alpha)_{\alpha\in\nset_0^n}$ we define the Riesz functional $L_s$ as the linear functional $L_s:\rset[x_1,\dots,x_n]\to\rset$ by $L_s(x^\alpha) := s_\alpha$.

A symmetric matrix $A\in\rset^{n\times n}$ is called positive semi-definite $A\succeq 0$ if $x^T Ax\geq 0$ holds for all $x\in\rset^n$.
We have the following known results.

\begin{schurthm}[{\cite{schur11}}]\label{thm:schur}
Let $n\in\nset$ and $A=(a_{i,j})_{i,j=1}^n, B=(b_{i,j})_{i,j=1}^n\in\rset^{n\times n}$ be positive semi-definite matrices.
Then the Hadamard product is positive semi-definite, i.e., $A\circ B := (a_{i,j} b_{i,j})_{i,j=1}^n \succeq 0$.
\end{schurthm}

For linear maps $T:\rset[x_1,\dots,x_n]\to\rset[x_1,\dots,x_n]$ we have the following representation as differential operators.

\begin{lem}[folklore, see e.g.\ {\cite[Lem.\ 2.3]{netzer10}}]\label{lem:Tform}
Let $n\in\nset$.
Then the following are equivalent:
\begin{enumerate}[(i)]
\item $T:\rset[x_1,\dots,x_n]\to\rset[x_1,\dots,x_n]$ is linear.

\item There exist unique $q_\alpha\in\rset[x_1,\dots,x_n]$ such that
$\displaystyle T = \sum_{\alpha\in\nset_0^n} q_\alpha\cdot\partial^\alpha$.
\end{enumerate}
\end{lem}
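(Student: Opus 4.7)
The direction (ii)$\Rightarrow$(i) is immediate: each $\partial^\alpha$ is $\rset$-linear on $\rset[x_1,\dots,x_n]$, multiplication by $q_\alpha$ is linear, the (a priori infinite) sum $\sum_\alpha q_\alpha\partial^\alpha p$ is finite for every fixed $p$ since $\partial^\alpha p = 0$ whenever $|\alpha|>\deg p$, and sums of linear operators are linear. So the only content is (i)$\Rightarrow$(ii) together with uniqueness.

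For (i)$\Rightarrow$(ii) the plan is to construct the $q_\alpha$ by induction on $|\beta|$ via the identity the representation forces on monomials. Using $\partial^\alpha x^\beta = \tfrac{\beta!}{(\beta-\alpha)!}x^{\beta-\alpha}$ when $\alpha\leq\beta$ componentwise and $0$ otherwise, the required equation $Tx^\beta = \sum_\alpha q_\alpha\partial^\alpha x^\beta$ becomes
\[
Tx^\beta \;=\; \beta!\, q_\beta \;+\; \sum_{\alpha<\beta} \frac{\beta!}{(\beta-\alpha)!}\, q_\alpha\cdot x^{\beta-\alpha},
\]
where $\alpha<\beta$ means $\alpha\leq\beta$ componentwise and $\alpha\neq\beta$. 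Solving for $q_\beta$ gives the explicit recursion
\[
q_\beta \;:=\; \frac{Tx^\beta}{\beta!} \;-\; \sum_{\alpha<\beta} \frac{q_\alpha\cdot x^{\beta-\alpha}}{(\beta-\alpha)!},
\]
starting from $q_0 := T(1)$. Since $T$ maps polynomials to polynomials and each $q_\alpha$ with $\alpha<\beta$ is already a polynomial by the inductive hypothesis, $q_\beta\in\rset[x_1,\dots,x_n]$. Extending linearly, the identity $Tp = \sum_\alpha q_\alpha\partial^\alpha p$ holds on every monomial by construction and hence on all of $\rset[x_1,\dots,x_n]$.

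Uniqueness is built into the construction: any family $(q_\alpha')$ representing $T$ must satisfy the same equation on $x^\beta$, and reading it as a formula for $q_\beta'$ in terms of $Tx^\beta$ and the previously determined $q_\alpha'$ with $\alpha<\beta$ forces $q_\beta' = q_\beta$ by induction on $|\beta|$. There is no real obstacle in this proof; the only delicate point is the multi-index bookkeeping to verify that every term of $Tx^\beta$ is absorbed exactly once by the partial-order sum over $\alpha\leq\beta$, which the factor $\tfrac{\beta!}{(\beta-\alpha)!}$ handles automatically.
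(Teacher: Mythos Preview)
Your proof is correct; the recursive construction and the induction on $|\beta|$ are sound (note that $\alpha\leq\beta$ componentwise with $\alpha\neq\beta$ forces $|\alpha|<|\beta|$, so the induction is well-founded). The paper itself does not supply a proof of this lemma---it is stated as folklore with a reference to \cite[Lem.~2.3]{netzer10}---so there is no in-paper argument to compare against; your approach is the standard one.
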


\begin{rem}
Besides \Cref{lem:Tform} there is a second representation of linear maps $T$, namely
\begin{equation}\label{eq:Tkernel}
T = \sum_{i\in\nset_0} l_i(\,\cdot\,)\cdot p_i
\end{equation}
with linear $l_i:\rset[x_1,\dots,x_n]\to\rset$ and $p_i\in\rset[x_1,\dots,x_n]$ for all $i\in\nset_0$.
The representation (\ref{eq:Tkernel}) follows from a consequence of the \emph{Schwartz' Kernel Theorem} \cite[Ch.\ 50]{treves67} since $\rset[x_1,\dots,x_n]$ is nuclear \cite[p.\ 526, Cor.\ 2]{treves67}.
Hence, by \cite[eq.\ (50.18)]{treves67} we have for any nuclear space $E$ and Fr\'echet space $F$ that
\[L(E,F) \cong E'\widehat{\otimes} F\]
holds with the unique completion $E'\widehat{\otimes} F$ of $E'\otimes F$ since $E$ and hence also $E'$ are nuclear.
In fact, a representation (\ref{eq:Tkernel}) can easily be written down as
\[Tf = \sum_{\alpha\in\nset_0^n} l_\alpha(Tf)\cdot x^\alpha \qquad\text{for all}\ f\in\rset[x_1,\dots,x_n]\]
with $l_\alpha(x^\beta) := \delta_{\alpha,\beta}$ for all $\alpha,\beta\in\nset_0^n$.

If in (\ref{eq:Tkernel}) $l_i$ are $K$-moment functionals and $p_i\geq 0$ on $K\subseteq\rset^n$ closed then $T$ is a $K$-positivity preserver.
However, not every $K$-positivity preserver has a representation as (\ref{eq:Tkernel}).

To see this we take $K=\rset^n$ and $T = \id$ the identity.
Since $(x_1-a)^2$ with $a\in\rset$ are extreme rays of $\pos(\rset^n)$ they can not be written as a conic linear combination of any other $p\in\pos(\rset^n)$.
Hence, in (\ref{eq:Tkernel}) each $(x_1-a)^2$ with $a\in\rset$ must be contained in $\{p_i\}_{i\in\nset_0}$.
But since $\nset_0$ is countable and $\rset$ is uncountable this is a contradiction.

Therefore, in \cite{guterman08,borcea11,netzer10,didio24posPresConst,didio24posPres2arxiv} and also in the present work the representation in \Cref{lem:Tform} is used and not the representation (\ref{eq:Tkernel}).
\exmsymbol
\end{rem}

\subsection{The additive Convolution $*$}
\label{subsec:addConv}

\begin{dfn}[see e.g.\ {\cite[Sect.\ 3.9]{bogachevMeasureTheory}}]\label{dfn:addConv}
Let $n\in\nset$ and let $\mu$ and $\nu$ be $\sigma$-finite measures on $\rset^n$.
We define the \emph{additive convolution} $\mu*\nu$ by
\begin{multline*}
(\mu*\nu)(A) := \int_{\rset^n\times\rset^n} \chi_A(x+y)~\diff\mu(x)~\diff\nu(y)\\
= \int_{\rset^n} \mu(A-y)~\diff\nu(y) = \int_{\rset^n} \nu(A-x)~\diff\mu(x).
\end{multline*}
We define
\[\mu^{*0} := \delta_0 \quad\text{and}\quad \mu^{*k} := \underbrace{\mu*\dots *\mu}_{k\text{-times}}\]
for all $k\in\nset$.
\end{dfn}

With $a:\rset^n\times\rset^n\to\rset^n$, $(x,y)\mapsto x+y$ we have that \Cref{dfn:addConv} can be extended to (not necessarily $\sigma$-finite) measures by
\[\mu * \nu := (\mu\times\nu)\circ a^{-1}.\]

\begin{dfn}
Let $n,k\in\nset$.
We say that a measure $\mu$ on $\rset^n$ is \emph{$*$-divisible by $k$} if there exists a measure $\nu_k$ such that $\mu = \nu_k^{*k}$.
We say that $\mu$ is infinitely $*$-divisible if $\mu$ is $*$-divisible for all $k\in\nset$.
\end{dfn}

Infinitely $*$-divisible probability measures are fully characterized through the following form of their characteristic function.

\begin{lkthm}[\emph{additive version}, see e.g.\ {\cite[Thm.\ 8.1]{sato99}} {\cite[Cor.\ 15.8]{kallenberg02}}, or {\cite[Satz 16.17]{klenkewtheorie}}]\label{thm:levyKhinchin}
Let $n\in\nset$ and let $\mu$ be a probability measure.
Then the following are equivalent:
\begin{enumerate}[(i)]
\item $\mu$ is infinitely $*$-divisible.

\item There exists a vector $b\in\rset^n$, a symmetric matrix $\Sigma\in\rset^{n\times n}$ with $\Sigma\succeq 0$, and a $\sigma$-finite measure $\nu$ on $\rset^n$ with
\[\nu(\{0\}) = 0 \quad\text{and}\quad \int_{\rset^n} \min(\|x\|_2^2, 1)~\diff\nu(x)<\infty\]
such that
\[\log \int e^{itx}~\diff\mu(x) = itb - \frac{1}{2}t^T\Sigma t + \int e^{itx} - 1 - itx\cdot\chi_{\{\|x\|_2 < 1\}}(x)~\diff\nu(x)\]
holds for the characteristic function of $\mu$.
\end{enumerate}
\end{lkthm}

The measure $\nu$ in (ii) is called \emph{L\'evy measure}.
With the \Cref{thm:levyKhinchin} we previously fully characterized the generators of positivity preserving semi-groups on $\rset[x_1,\dots,x_n]$ in the constant coefficient case \cite[Main Thm.\ 4.11]{didio24posPresConst} and the non-constant coefficient case \cite[Thm.\ 5.12]{didio24posPres2arxiv}.

\begin{thm}[\emph{constant coefficient case} {\cite[Main Thm.\ 4.11]{didio24posPresConst}}]\label{thm:posGenConst}
Let $n\in\nset$ and let
\[A = \sum_{\alpha\in\nset_0^n\setminus\{0\}} \frac{a_\alpha}{\alpha!}\cdot\partial^\alpha \quad\in \rset[[\partial_1,\dots,\partial_n]].\]
Then the following are equivalent:
\begin{enumerate}[(i)]
\item $A$ is a generator of a positivity preserving semi-group $(e^{tA})_{t\geq 0}$.

\item There exists a symmetric matrix $\Sigma=(\sigma_{i,j})_{i,j=1}^n\in\rset^{n\times n}$, a vector $b=(b_1,\dots,b_n)^T\in\rset^n$, and a $\sigma$-finite measure $\nu$ on $\rset^n$ with
\[\nu(\{0\}) = 0 \quad\text{and}\quad \int_{\rset^n} |x^\alpha|~\diff\nu(x) < \infty\]
for all $\alpha\in\nset_0^n$ with $|\alpha|\geq 2$ such that
\begin{align*}
a_{e_i} &= b_i + \int_{\|x\|_2\geq 1} x_i~\diff\nu(x) && \text{for all}\ i=1,\dots,n,\\
a_{e_i+e_j} &= \sigma_{i,j} + \int_{\rset^n} x^{e_i+e_j}~\diff\nu(x) && \text{for all}\ i,j=1,\dots,n
\intertext{and}
a_\alpha &= \int_{\rset^n} x^\alpha~\diff\nu(x) && \text{for all}\ \alpha\in\nset_0^n\ \text{with}\ |\alpha|\geq 3.
\end{align*}
\end{enumerate}
\end{thm}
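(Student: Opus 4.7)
The strategy is to interpret a constant-coefficient positivity preserving semi-group $(e^{tA})_{t\geq 0}$ as a convolution semi-group of positive measures $(\mu_t)_{t\geq 0}$ on $\rset^n$, so that infinite $*$-divisibility of $\mu_1$ can be exploited via \Cref{thm:levyKhinchin}. The bridge from moments to polynomials is exactly the correspondence between $\rset^n$-moment functionals and positivity preservers.

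\textbf{Direction (i) $\Rightarrow$ (ii).} Since $A$ has constant coefficients, it commutes with all translations $\tau_h\colon p(x)\mapsto p(x+h)$, and hence so does $e^{tA}$ for every $t\geq 0$. A translation invariant positivity preserver on $\rset[x_1,\dots,x_n]$ is determined by its evaluation at $0$, and I would show that the sequence $s_\alpha(t) := (e^{tA} x^\alpha)(0)$ is a $\rset^n$-moment sequence with representing measure $\mu_t$, using that evaluation at $0$ composed with $e^{tA}$ is a positive linear functional on $\pos(\rset^n)$. Translation invariance then upgrades this to
\[(e^{tA} p)(x) = \int_{\rset^n} p(x+y)~\diff\mu_t(y) \qquad \text{for all } p \in \rset[x_1,\dots,x_n].\]
The semi-group law $e^{(s+t)A} = e^{sA}\cdot e^{tA}$ translates into $\mu_{s+t} = \mu_s * \mu_t$, so after normalizing $\mu_1$ to a probability measure (the total mass is $(e^{tA}1)(x)$, which is constant in $x$ and can be absorbed into $b$), \Cref{thm:levyKhinchin} delivers the L\'evy--Khinchin data $(b,\Sigma,\nu)$ for $\mu_1$, and consequently for every $\mu_t$.

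\textbf{Matching coefficients.} To read off $a_\alpha$, I would expand $(x+y)^\alpha = \sum_{\beta\leq\alpha}\binom{\alpha}{\beta} x^{\alpha-\beta} y^\beta$, so that
\[(e^{tA} x^\alpha)(x) = \sum_{\beta\leq\alpha}\binom{\alpha}{\beta} x^{\alpha-\beta} m_\beta(t), \qquad m_\beta(t) := \int_{\rset^n} y^\beta~\diff\mu_t(y),\]
and compare with $A x^\alpha = \sum_{\gamma}\frac{a_\gamma}{\gamma!}\partial^\gamma x^\alpha$ after differentiating at $t=0$. This yields $a_\beta = m_\beta'(0)$. Expanding the logarithm of the characteristic function from \Cref{thm:levyKhinchin}(ii) as a power series in the argument and identifying Taylor coefficients (via the Riesz functional applied to $(it)^\beta/\beta!$) recovers the three cases: the drift $b_i$ together with the truncation tail produces the $|\beta|=1$ formula, $\Sigma+\int x^{e_i+e_j}\diff\nu$ produces the $|\beta|=2$ formula, and for $|\beta|\geq 3$ the Gaussian and truncation terms vanish and only the pure moment $\int x^\alpha~\diff\nu$ remains.

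\textbf{Direction (ii) $\Rightarrow$ (i).} Given $(b,\Sigma,\nu)$, \Cref{thm:levyKhinchin} furnishes an infinitely $*$-divisible probability measure $\mu_1$ together with its convolution semi-group $(\mu_t)$. The stronger integrability $\int|x^\alpha|~\diff\nu<\infty$ for all $|\alpha|\geq 2$ ensures (by standard Lévy process moment computations, i.e., differentiating the characteristic function at $0$ up to arbitrary order) that each $\mu_t$ has finite polynomial moments of every order, so $T_t p(x) := \int p(x+y)~\diff\mu_t(y)$ is a well-defined linear map $\rset[x_1,\dots,x_n]\to\rset[x_1,\dots,x_n]$ that preserves $\pos(\rset^n)$. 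Strong continuity $t\mapsto T_t p$ in each finite-dimensional subspace $\rset[x_1,\dots,x_n]_{\leq d}$ follows from the continuity of $t\mapsto m_\beta(t)$, and differentiating at $t=0$ reproduces the stated series $A=\sum\frac{a_\alpha}{\alpha!}\partial^\alpha$ with the prescribed coefficients.

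\textbf{Main obstacle.} The subtle point is the bookkeeping around the truncation $\chi_{\{\|x\|_2<1\}}$ in \Cref{thm:levyKhinchin}: verifying that it contributes only to the first-order coefficients $a_{e_i}$ (producing the tail integral $\int_{\|x\|_2\geq 1} x_i~\diff\nu$) while leaving all higher $a_\alpha$ as plain moments of $\nu$ requires the strengthened integrability assumption, which is not part of the classical L\'evy--Khinchin statement. Making this passage between characteristic-function Taylor coefficients and moment integrals rigorous, and showing it is reversible, is the technical core of the proof.
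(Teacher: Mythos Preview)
The present paper does not prove \Cref{thm:posGenConst}; it is quoted in the preliminaries as \cite[Main Thm.\ 4.11]{didio24posPresConst} and used as input for the diagonal-operator results in \Cref{sec:generators}. The only hint the paper gives about the original argument is at the end of \Cref{sec:generators}: ``in \cite{didio24posPresConst} we used the differential operator sum representation and the known L\'evy--Khinchine representation.'' Your outline---represent $e^{tA}$ as convolution with $\mu_t$, deduce $\mu_{s+t}=\mu_s*\mu_t$ from the semi-group law, invoke \Cref{thm:levyKhinchin}, and then match the Taylor coefficients $a_\alpha=m_\alpha'(0)$ against the L\'evy triplet---is exactly this strategy, so there is no discrepancy to report.

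Two small corrections to your sketch. First, because the sum defining $A$ excludes $\alpha=0$, we have $A1=0$ and hence $e^{tA}1=1$ identically; the measures $\mu_t$ are already probability measures and no renormalization is needed (nothing has to be ``absorbed into $b$''). Second, in the direction (ii)~$\Rightarrow$~(i) the existence of all polynomial moments of $\mu_t$ is not quite a consequence of differentiability of the characteristic function at $0$; the correct statement (which the paper also alludes to after \Cref{thm:posGenConst}) is that $\mu_t$ has all moments iff the restriction $\nu|_{\{\|x\|_2>1\}}$ does, see \cite[Thm.~25.3]{sato99}. Your integrability hypothesis $\int|x^\alpha|\,\diff\nu<\infty$ for $|\alpha|\geq 2$ is precisely what guarantees this. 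With these adjustments your plan is sound and matches the approach of the cited source.
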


Note, that for an infinitely divisible measure $\mu$ with L\'evy measure $\nu$ we have that $\mu$ possesses all moments if and only if $\nu|_{\{\|x\|_2>1\}}$ possesses all moments, see \cite[Thm.\ 25.3]{sato99}.

\begin{thm}[\emph{non-constant coefficient case} {\cite[Thm.\ 5.12]{didio24posPres2arxiv}}]\label{thm:posGenNonConst}
Let $n\in\nset$ and let
\[A = \sum_{\alpha\in\nset_0^n\setminus\{0\}} \frac{a_\alpha}{\alpha!}\cdot\partial^\alpha\]
with $a_\alpha\in \rset[x_1,\dots,x_n]_{\leq |\alpha|}$ for all $\alpha\in\nset_0^n\setminus\{0\}$.
The following are equivalent:
\begin{enumerate}[(i)]
\item $A$ is the generator of a positivity preserving semi-group $(e^{tA})_{t\geq 0}$.

\item For every $y\in\rset^n$ there exist a symmetric matrix $\Sigma(y)=(\sigma_{i,j}(y))_{i,j=1}^n\in\rset^{n\times n}$, a vector $b(y)=(b_1(y),\dots,b_n(y))^T\in\rset^n$, and a $\sigma$-finite measure $\nu_y$ on $\rset^n$ with
\[\nu_y(\{0\}) = 0 \quad\text{and}\quad \int_{\rset^n} |x^\alpha|~\diff\nu_y(x) < \infty\]
for all $\alpha\in\nset_0^n$ with $|\alpha|\geq 2$ such that
\begin{align*}
a_{e_i}(y) &= b_i(y) + \int_{\|x\|_2\geq 1} x_i~\diff\nu_y(x) &&\text{for all}\ i=1,\dots,n,\\
a_{e_i+e_j}(y) &= \sigma_{i,j}(y) + \int_{\rset^n} x^{e_i+e_j}~\diff\nu_y(x) &&\text{for all}\ i,j=1,\dots,n
\intertext{and}
a_\alpha(y) &= \int_{\rset^n} x^\alpha~\diff\nu_y(x) && \text{for all}\ \alpha\in\nset_0^n\ \text{with}\ |\alpha|\geq 3.
\end{align*}
\end{enumerate}
\end{thm}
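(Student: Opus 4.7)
The plan is to reduce the non-constant coefficient case to the constant coefficient Theorem~\ref{thm:posGenConst} by ``freezing'' the coefficients at each point $y\in\rset^n$. The key structural feature that enables this reduction is the degree bound $\deg a_\alpha \leq |\alpha|$: it ensures that $A$ preserves each finite-dimensional subspace $V_d := \rset[x_1,\ldots,x_n]_{\leq d}$, so that $e^{tA}|_{V_d}$ is simply the matrix exponential on $V_d$ and no functional-analytic domain subtleties arise.

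For the direction (i) $\Rightarrow$ (ii), I would fix $y\in\rset^n$ and introduce the constant-coefficient operator
\[A_y := \sum_{\alpha\in\nset_0^n\setminus\{0\}} \frac{a_\alpha(y)}{\alpha!}\,\partial^\alpha.\]
The first step is to establish the positive maximum principle (PMP) at $y$: for every $p\in\pos(\rset^n)$ with $p(y)=0$, one has $(Ap)(y)\geq 0$. This is immediate from (i) by differentiating $t\mapsto (e^{tA}p)(y)\geq 0$ at $t=0^+$. Since $(Ap)(y)$ depends only on the Taylor coefficients $(\partial^\alpha p)(y)$, the change of variables $q(x):=p(x+y)$ transfers the PMP for $A$ at $y$ into the PMP for $A_y$ at the origin. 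A standard argument shows the PMP at $0$ is enough for $A_y$ to generate a positivity-preserving semigroup: the cone-tangency criterion on each finite-dimensional slice $\pos(\rset^n) \cap V_d$ yields $e^{tA_y}\pos(\rset^n)\subseteq\pos(\rset^n)$. Applying Theorem~\ref{thm:posGenConst} to $A_y$ then produces the desired Lévy--Khinchin data $(b(y),\Sigma(y),\nu_y)$ with the claimed integral identities.

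For (ii) $\Rightarrow$ (i), I would reverse the argument. Given the stated data, Theorem~\ref{thm:posGenConst} implies that each frozen operator $A_y$ generates a positivity-preserving semigroup; differentiating at $t=0$ shows $A_y$ satisfies the PMP at the origin, and transferring back yields that $A$ itself satisfies the PMP at every $y\in\rset^n$. Because $A$ preserves $V_d$ for each $d$, I may then apply the cone-invariance criterion to the finite-dimensional matrix exponential $e^{tA}|_{V_d}$ acting on the closed convex cone $\pos(\rset^n)\cap V_d$: the supporting functionals at a boundary point $p$ are precisely the non-negative combinations of point evaluations $\delta_{y_0}$ at zeros $y_0$ of $p$, so the pointwise PMP is exactly the Bony/Nagumo tangency condition that guarantees $e^{tA}(\pos(\rset^n)\cap V_d)\subseteq \pos(\rset^n)\cap V_d$ for all $t\geq 0$. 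Taking the union over $d$ yields (i).

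The main obstacle is this passage from pointwise PMP to global positivity of $e^{tA}$. It requires identifying the extreme support functionals of $\pos(\rset^n)\cap V_d$ with point evaluations at zeros of boundary polynomials, and combining this with Nagumo's cone-invariance criterion for linear flows; once the correct class of supporting functionals is in hand, the PMP that we extract from Theorem~\ref{thm:posGenConst} at each $y$ plugs in directly. The polynomial-degree bound on the $a_\alpha$ is essential here, as it keeps everything inside finite-dimensional $V_d$ where Nagumo's criterion is cleanly available.
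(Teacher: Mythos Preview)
The present paper does not contain a proof of this theorem: it is stated in the preliminaries as a known result, imported verbatim from \cite[Thm.\ 5.12]{didio24posPres2arxiv}. There is therefore no ``paper's own proof'' here to compare your proposal against.

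That said, your strategy of freezing the coefficients at each point $y$ to obtain a constant-coefficient operator $A_y$ and then invoking \Cref{thm:posGenConst} is exactly the natural route, and it is consistent with how the paper itself \emph{uses} this theorem later (see the proof of \Cref{thm:diagonalGenerators}, where the reduction to a fixed evaluation point is precisely what happens). One small point worth making explicit in your write-up: in the direction (i) $\Rightarrow$ (ii), after you obtain the positive maximum principle for $A_y$ at the origin, you should observe that translation invariance of the constant-coefficient operator $A_y$ immediately upgrades this to the PMP at \emph{every} point, which is what the cone-tangency argument on each $V_d$ actually needs. You allude to this but it deserves a sentence. Otherwise the sketch is sound and the identification of the supporting functionals of $\pos(\rset^n)\cap V_d$ with nonnegative combinations of point evaluations at zeros is the correct ingredient for the Nagumo/Bony criterion.
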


\Cref{thm:posGenNonConst} states that for a generator of a positivity preserving semi-group there exist Lévy triplets $(b(y), \Sigma(y), \nu_y)$ for every $y \in \rset^n$. 
This resembles \emph{Feller processes} which are multidimensional analogues of L\'evy processes allowing the L\'evy measure to depend on $y\in\rset^n$, see \cite[Cor.\ 2.23]{boettcher13}.

\subsection{The multiplicative Convolution $\odot$}
\label{subsec:multConv}

While the additive convolution of measures and hence (moment) sequences is textbook knowledge \cite[Sec.\ 3.9]{bogachevMeasureTheory} the multiplicative convolution is lesser known but known, see e.g.\ \cite{carcoma17} for the one dimensional formulation.
We give here the general multivariate formulations.

\begin{dfn}\label{dfn:odotMeasures}
Let $n\in\nset$ and let
\[m:\rset^n\times\rset^n\to\rset^n,\quad (x=(x_1,\dots,x_n),y=(y_1,\dots,y_n)) \mapsto (x_1 y_1,\dots,x_n y_n).\]
For measures $\mu$ and $\nu$ we define the \emph{multiplicative convolution $\mu\odot\nu$} by
\[\mu\odot\nu := (\mu\times\nu)\circ m^{-1}.\]
We define
\[\mu^{\odot 0} := \delta_1 \quad\text{and}\quad \mu^{\odot k} := \underbrace{\mu\odot\dots\odot\mu}_{k\text{-times}}\]
for all $k\in\nset$.
\end{dfn}

Alternatively, if $\mu$ and $\nu$ are $\sigma$-finite we can define $\mu\odot\nu$ by
\[(\mu\odot\nu)(A):= \int_{\rset^n\times\rset^n} \chi_A(x_1 y_1,\dots,x_n y_n)~\diff\mu(x_1,\dots,x_n)~\diff\nu(y_1,\dots,y_n)\]
for all Borel sets $A\subseteq\rset^n$.
$\odot$ has the following (well-known/simple) properties.

\begin{lem}\label{lem:odotProps}
Let $n\in\nset$, let $\mu$, $\nu$, and $\omega$ be measures on $\rset^n$, let $A$ be a Borel set of $\rset^n$, let $u=(u_1,\dots,u_n)$, $v=(v_1,\dots,v_n)\in\rset^n$, and let $a,b\geq 0$ and $c>0$ be constants.
Then the following hold:
\begin{enumerate}[(i)]
\item $\mu\odot\nu = \nu\odot\mu$.

\item $(a\mu + b\nu)\odot\omega = a\cdot \mu\odot\omega + b\cdot\nu\odot\omega$.

\item $(\mu\odot\nu)\odot\omega = \mu\odot (\nu\odot\omega)$.

\item $(\mu\odot\nu)(\rset^n) = \mu(\rset^n)\cdot\nu(\rset^n)$.

\item $\delta_0\odot\mu = \mu(\rset^n)\cdot\delta_0$.\label{item:first}

\item $\delta_\one\odot\mu = \mu$ with $\one := (1,\dots,1)\in\rset^n$.\label{item:middle}

\item $(\delta_{c\cdot\one}\odot\mu)(A) = \mu(c^{-1}\cdot A)$.\label{item:last}

\item $\delta_u\odot\delta_v = \delta_{u\odot v}$ with $u\odot v := (u_1 v_1,\dots, u_n v_n)$.\label{item:last2}
\end{enumerate}
\end{lem}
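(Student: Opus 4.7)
The plan is to derive all eight identities directly from the pushforward definition $\mu\odot\nu = (\mu\times\nu)\circ m^{-1}$, exploiting only the symmetry, componentwise structure, and associativity of the map $m$. None of the eight claims is deep; the work consists of short and essentially independent verifications, which I group below according to the structure each uses. Throughout I intend to apply either the pushforward form or, equivalently in the $\sigma$-finite case, the integral form $(\mu\odot\nu)(A)=\int\int\chi_A(m(x,y))~\diff\mu(x)~\diff\nu(y)$.

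For items (i)--(iii) (commutativity, linearity, associativity), I would use general properties of pushforwards. Commutativity follows from the identity $m(x,y)=m(y,x)$ together with the fact that the flip $(x,y)\mapsto(y,x)$ sends $\mu\times\nu$ to $\nu\times\mu$. Linearity in the first slot is inherited from the linearity of both $\mu\mapsto\mu\times\omega$ and of pushforward by $m$. For associativity, I would verify that both $(\mu\odot\nu)\odot\omega$ and $\mu\odot(\nu\odot\omega)$ equal the pushforward of the triple product measure on $(\rset^n)^3$ along the componentwise map $(x,y,z)\mapsto(x_i y_i z_i)_{i=1}^n$, which is immediate from $m(m(x,y),z)=m(x,m(y,z))$.

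For item (iv), taking $A=\rset^n$ gives $m^{-1}(\rset^n)=\rset^n\times\rset^n$ and hence $(\mu\odot\nu)(\rset^n)=(\mu\times\nu)(\rset^n\times\rset^n)=\mu(\rset^n)\cdot\nu(\rset^n)$. Items (v)--(viii) are then handled by plugging a Dirac mass into the integral form and computing $m$ on its support. For (v), $m(0,y)=0$ for all $y$, so the pushforward collapses to $\delta_0$ with total mass $\mu(\rset^n)$. For (vi), $m(\one,y)=y$, so the pushforward is $\mu$ itself. For (vii), $(\delta_{c\one}\odot\mu)(A)=\mu(\{y:cy\in A\})=\mu(c^{-1}A)$. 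For (viii), $\delta_u\times\delta_v=\delta_{(u,v)}$ together with $m(u,v)=u\odot v$ yields $\delta_u\odot\delta_v=\delta_{u\odot v}$ immediately.

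The only potential obstacle is bookkeeping around $\sigma$-finiteness, since both the product measure $\mu\times\nu$ and the Fubini manipulations used in the integral form require it. In the $\sigma$-finite setting, which is the one used throughout the paper, every step goes through without modification; in the general case one can still interpret $(\mu\times\nu)\circ m^{-1}$ via the pushforward construction, and the statements remain unchanged.
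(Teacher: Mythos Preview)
Your proposal is correct; each of the eight verifications goes through exactly as you describe, and the remark about $\sigma$-finiteness is appropriate. The paper itself gives no proof of this lemma, introducing it only as ``well-known/simple'' properties of $\odot$, so your sketch is already more detailed than what appears there.
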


\begin{lem}\label{lem:odot_inf_diff_equals_log_inf_diff}
Let $\mu$ be a probability measure on $(0, \infty)^n$. 
Then the following are equivalent:
\begin{enumerate}[(i)]
\item $\mu$ is infinitely $\odot$-divisible.
\item $\mu \circ \ln^{-1}$ is infinitely $*$-divisible with $\ln x = (\ln x_1, \dots, \ln x_n)$, $x \in (0, \infty)^n$.
\end{enumerate}
\end{lem}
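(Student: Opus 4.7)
The plan is to exploit the fact that the componentwise logarithm
\[\ln:(0,\infty)^n \to \rset^n,\qquad (x_1,\dots,x_n)\mapsto (\ln x_1,\dots,\ln x_n),\]
is a Borel isomorphism onto $\rset^n$ with inverse the componentwise $\exp$, and that it intertwines the multiplication map $m$ from \Cref{dfn:odotMeasures} with the addition map $a(x,y)=x+y$. Concretely, $\ln(x\odot y)=\ln(x)+\ln(y)$ for all $x,y\in(0,\infty)^n$, i.e.\ $\ln\circ m = a\circ(\ln\times\ln)$.

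First I would establish the transport identity: for any probability measures $\mu,\nu$ on $(0,\infty)^n$,
\[(\mu\odot\nu)\circ\ln^{-1} = (\mu\circ\ln^{-1})\,*\,(\nu\circ\ln^{-1}).\]
This is a direct pushforward computation. By \Cref{dfn:odotMeasures} and the intertwining $\ln\circ m=a\circ(\ln\times\ln)$,
\[(\mu\odot\nu)\circ\ln^{-1} = (\mu\times\nu)\circ m^{-1}\circ\ln^{-1} = (\mu\times\nu)\circ(\ln\times\ln)^{-1}\circ a^{-1}.\]
Since $\mu\times\nu$ is a product probability measure and $\ln\times\ln$ acts coordinatewise, $(\mu\times\nu)\circ(\ln\times\ln)^{-1}=(\mu\circ\ln^{-1})\times(\nu\circ\ln^{-1})$, and the right-hand side composed with $a^{-1}$ is precisely the additive convolution $(\mu\circ\ln^{-1})*(\nu\circ\ln^{-1})$ from \Cref{dfn:addConv}. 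Iterating this identity $k-1$ times yields $\mu^{\odot k}\circ\ln^{-1} = (\mu\circ\ln^{-1})^{*k}$.

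Second I would deduce the equivalence. Write $\tilde\mu:=\mu\circ\ln^{-1}$.
For (i)$\Rightarrow$(ii): given $\nu_k$ on $(0,\infty)^n$ with $\mu=\nu_k^{\odot k}$, applying the transport identity gives $\tilde\mu = \tilde\nu_k^{*k}$, so $\tilde\mu$ is infinitely $*$-divisible.
For (ii)$\Rightarrow$(i): since $\exp:\rset^n\to(0,\infty)^n$ is a Borel isomorphism, any probability measure $\rho_k$ on $\rset^n$ with $\tilde\mu=\rho_k^{*k}$ pulls back to the probability measure $\rho_k\circ\exp^{-1}$ on $(0,\infty)^n$, and applying the transport identity (equivalently, its inverse, obtained by exchanging the roles of $\ln$ and $\exp$) gives $\mu = (\rho_k\circ\exp^{-1})^{\odot k}$.

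The only real obstacle is measure-theoretic bookkeeping, namely the factorization $(\mu\times\nu)\circ(\ln\times\ln)^{-1}=\tilde\mu\times\tilde\nu$ and the fact that pushforward commutes with the composition of Borel maps; both are standard facts for Borel maps between Polish spaces and do not require $\sigma$-finiteness beyond what is already present.
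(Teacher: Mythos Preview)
Your proof is correct. The paper states this lemma without proof, so there is nothing to compare against; your argument via the group isomorphism $\ln:((0,\infty)^n,\odot)\to(\rset^n,+)$ and the transport identity $(\mu\odot\nu)\circ\ln^{-1}=(\mu\circ\ln^{-1})*(\nu\circ\ln^{-1})$ is the natural one and is presumably what the authors intended.
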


As a corollary of \Cref{lem:odot_inf_diff_equals_log_inf_diff} we obtain a version of the Lévy-Khintchine Theorem for infinitely $\odot$-divisible measures on $(0,\infty)^n$.
Alternatively this representation can be derived using Hunt's theorem, see \cite[§6.3]{hunt56} or \cite[Thm.\ 5.5.1]{applebaum14}.

\begin{lkthm}[multiplicative version on $(0,\infty)^n$]
Let $n \in \nset$ and let $\mu$ be a probability measure on $(0, \infty)^n$. 
Then the following are equivalent:
\begin{enumerate}[(i)]
\item $\mu$ is infinitely $\odot$-divisible.
\item There exist a vector $b \in \rset^n$, a symmetric matrix $\Sigma \in \rset^{n \times n}$ with $\Sigma \succeq 0$, and a $\sigma$-finite measure $\nu$ on $(0, \infty)^n$ with
\[\nu(\{\one\}) = 0\quad\text{and}\quad \int_{(0, \infty)^n} \min(\|\ln x\|_2^2, 1) ~\diff\nu(x) < \infty\]
with $\ln(x_1, \dots, x_n) = (\ln(x_1), \dots, \ln(x_n)) \in \rset^n$ for $x \in (0, \infty)^n$ such that
\begin{multline*}
	\log \int_{(0, \infty)^n} e^{i\skl{t}{\ln x}}~\diff\mu(x) 
	= i\skl{b}{t} - \frac{1}{2}t^T\Sigma t 
\\	+ \int_{(0, \infty)^n} e^{i\skl{t}{\ln x}} - 1 - i\cdot\skl{t}{\ln x}\cdot\chi_{\{\|\ln x\|_2 < 1\}}(x) ~\diff\nu(x)
\end{multline*}
holds for all $t \in (0, \infty)^n$.
\end{enumerate}
\end{lkthm}

With $e^{i\skl{t}{\ln x}} = x^{it}$ (understood as a multivariate power) this representation is similar to the one used by Berg \cite{berg04,berg05,berg07} and Tyan \cite{tyan75} to characterize infinitely $\odot$-divisible moment sequences.
This theorem instead characterizes probability measures, meaning the moments of these measures do not need to exist.
For the diagonal operator case one would want to use this L\'evy-Khinchine representation and the technique used in the proof of \Cref{thm:posGenConst}.
But this requires additional assumptions such that the coefficients of the generator exist.
With \Cref{thm:diagonalGenerators} these restrictions can be dropped.

\section{Characterization of all linear Maps $S:\rset^{\nset_0^n}\to\rset^{\nset_0^n}$ preserving $K$-Moment Sequences}
%%%%%%%%%%%%%%%%%%%%%%%%%%%%%%%%
\label{sec:charLinMaps}

Linear maps preserving moment sequences have been characterized before \cite{carcoma17}.
But these are $\rset^n$-moment sequences.
A characterization of $K$-moment sequence preserving linear maps for general closed $K\subseteq\rset^n$ with $n\in\nset$ was open so far.
We use the results from \cite{didio24posPres2arxiv} to solve this general characterization.

Recall, that the set of sequences
\[\rset^{\nset_0^n} := \{s = (s_\alpha)_{\alpha\in\nset_0^n} \,|\, s_\alpha\in\rset\ \text{for all}\ \alpha\in\nset_0^n\}\]
is a \emph{Fr\'echet space} when equipped with the family of semi-norms
\[|s|_d := \max_{\alpha\in\nset_0^n: |\alpha|\leq d} |s_\alpha|,\]
i.e., it is metrizable, complete, and locally convex.
Completeness in $\rset^{\nset_0^n}$ is simply the following.
A sequences $s^{(i)} = (s_\alpha^{(i)})_{\alpha\in\nset_0^n}$ converges to a sequence $s$, i.e., $s^{(i)}\xrightarrow{i\to\infty} s$, if and only if $s_\alpha^{(i)}\xrightarrow{i\to\infty} s_\alpha$ for all $\alpha\in\nset_0^n$.
See \cite[pp.\ 91--92, Exm.\ III]{treves67} since $\rset^{\nset_0^n} \cong \rset[[x_1,\dots,x_n]]$.

The set of sequences $\rset^{\nset_0^n}$ has a topological dual.
It is the set of polynomials $\rset[x_1,\dots,x_n]$ equipped with the LF-topology, see \cite[Ch.\ 22]{treves67}.
Note, any LF-space is complete \cite[Thm.\ 13.1]{treves67} and hence the sequence of polynomials $p^{(i)} = \sum_{\alpha\in\nset_0^n: |\alpha|\leq d_i} c^{(i)}_\alpha\cdot x^\alpha$, $d_i\in\nset_0$, converges to a polynomial $p = \sum_{\alpha\in\nset_0^n: |\alpha|\leq d} c_\alpha\cdot x^\alpha$ in the LF-topology, i.e., $p^{(i)}\xrightarrow{i\to\infty} p$, if and only if $\sup_{i\in\nset} d_i = d < \infty$ and $c^{(i)}_\alpha \xrightarrow{i\to\infty} c_\alpha$.

The duality between $\rset^{\nset_0^n}$ and $\rset[x_1,\dots,x_n]$ can be expressed in a natural way by
\[
	\langle s,p\rangle := \sum_{\alpha\in\nset_0^n} s_\alpha c_\alpha
\]
and $\langle s,\,\cdot\,\rangle:\rset[x_1,\dots,x_n]\to\rset$ is the well-known Riesz functional $L_s = \langle s,\,\cdot\,\rangle$.
For more on topological vector spaces see e.g.\ \cite{treves67,koethe69TopVecSp1,koethe79TopVecSp2,schaef99}.

Since every linear operator $T:\rset[x_1,\dots,x_n]\to\rset[x_1,\dots,x_n]$ is given by \Cref{lem:Tform} with unique $q_\alpha\in\rset[x_1,\dots,x_n]$ we have that the dual operator
\[T^*:\rset^{\nset_0^n}\to\rset^{\nset_0^n}\]
defined by
\[\langle T^* s,p\rangle := \langle s, Tp\rangle\]
for all $s\in\rset^{\nset_0^n}$ and $p\in\rset[x_1,\dots,x_n]$ is given by
\begin{equation}\label{eq:TstarForm}
T^* = \sum_{\alpha\in\nset_0^n} (-1)^{|\alpha|}\cdot \partial^\alpha M_{q_\alpha}
\end{equation}
where $M_{q_\alpha}$ is the shift operator given by $M_{x^\beta}s := (s_{\alpha+\beta})_{\alpha\in\nset_0^n}$ and $M_{af+g} = aM_f + M_g$ for all $a\in\rset$ and $f,g\in\rset[x_1,\dots,x_n]$.
The $\partial^\alpha$ on $\rset^{\nset_0^n}$ are defined in the distributional sense as in \cite{didio23gaussian}, see e.g.\ \cite{grubbDistributions} for more on distributions.
Hence, every linear operator $S:\rset^{\nset_0^n}\to\rset^{\nset_0^n}$ is given by
\begin{equation}
S = \sum_{\alpha\in\nset_0^n} (-1)^{|\alpha|}\cdot \partial^\alpha M_{q_\alpha}
\end{equation}
with unique $q_\alpha\in\rset[x_1,\dots,x_n]$ since $T^{**} = T$ we have $S^*:\rset[x_1,\dots,x_n]\to\rset[x_1,\dots,x_n]$ linear.
Additionally, $(Ss)_\alpha = \langle Ss, x^\alpha\rangle = \langle s, S^* p\rangle$ with $\alpha\in\nset_0^n$ shows that each coordinate of $Ss$ is a finite linear combination of the coordinates $s_\alpha$ of $s$.
We summarize that in the following.

\begin{lem}
Let $n\in\nset$.
Then the following are equivalent:
\begin{enumerate}[(i)]
\item $S:\rset^{\nset_0^n}\to\rset^{\nset_0^n}$ is linear.

\item $S = \sum_{\alpha\in\nset_0^n} (-1)^{|\alpha|}\cdot \partial^\alpha M_{q_\alpha}$ with unique $q_\alpha\in\rset[x_1,\dots,x_n]$ for all $\alpha\in\nset_0^n$.
\end{enumerate}
The coordinates $(Ss)_\alpha$ of $Ss$ are a finite linear combination of coordinates of $s$.
\end{lem}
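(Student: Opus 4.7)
The plan is to reduce the classification to the already-established \Cref{lem:Tform} via the natural reflexive duality between the Fr\'echet space $\rset^{\nset_0^n}$ and the LF-space $\rset[x_1,\dots,x_n]$ under $\skl{s}{p} = \sum_\alpha s_\alpha c_\alpha$. All of the nontrivial content of the lemma lies in the implication (i)$\Rightarrow$(ii); the converse and the finite linear combination claim are essentially bookkeeping.

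For (ii)$\Rightarrow$(i) together with the ``finite linear combination'' assertion, I would fix $\beta\in\nset_0^n$ and observe that for each $\alpha$ the $\beta$-coordinate of $\partial^\alpha M_{q_\alpha} s$ only depends on finitely many coordinates $s_\gamma$ (bounded by $|\beta|+|\alpha|+\deg q_\alpha$), and that only finitely many $\alpha$ can contribute to the $\beta$-coordinate because $M_{q_\alpha}$ followed by $\partial^\alpha$ changes the index homogeneously. Hence the sum in (ii) is pointwise finite, defines a linear operator, and produces each $(Ss)_\beta$ as a finite $\rset$-linear combination of entries of $s$. For (i)$\Rightarrow$(ii) I would form the adjoint $T := S^*:\rset[x_1,\dots,x_n]\to\rset[x_1,\dots,x_n]$ given by $\skl{s}{Tp} := \skl{Ss}{p}$; this is a well-defined linear map because $\rset[x_1,\dots,x_n]$ is the full topological dual of $\rset^{\nset_0^n}$, so applying $S$ and then testing against a polynomial lands back in $\rset[x_1,\dots,x_n]$. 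By \Cref{lem:Tform} there are unique $q_\alpha\in\rset[x_1,\dots,x_n]$ with $T = \sum_\alpha q_\alpha\partial^\alpha$. Dualising termwise and using the two elementary identities $(q_\alpha\,\cdot\,)^* = M_{q_\alpha}$ (verified on monomials directly from the definition of the shift) and $(\partial^\alpha)^* = (-1)^{|\alpha|}\partial^\alpha$ in the distributional sense, together with the reflexivity relation $T^{**}=T$, I recover exactly the formula \eqref{eq:TstarForm} for $S$. Uniqueness of the $q_\alpha$ is inherited from \Cref{lem:Tform}.

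The main obstacle I anticipate is the justification that $S$ really does admit an adjoint with values in polynomials, i.e.\ verifying that $p\mapsto \skl{Ss}{p}$ corresponds to a polynomial (not merely a continuous linear functional on $\rset^{\nset_0^n}$), and that $T$ thus constructed is itself \emph{continuous} so that \Cref{lem:Tform} applies. This is handled by recalling that the continuous dual of the Fr\'echet space $\rset^{\nset_0^n}$ is exactly $\rset[x_1,\dots,x_n]$ with the LF-topology, and invoking the closed graph theorem (valid between Fr\'echet spaces) to upgrade plain linearity of $S$ to continuity. Once this topological point is in place, the two dualisation identities for $\partial^\alpha$ and for multiplication/shift reduce the proof to a direct computation on the basis $\{x^\alpha\}_{\alpha\in\nset_0^n}$, which is why the paper can present the result as essentially a corollary of \Cref{lem:Tform}.
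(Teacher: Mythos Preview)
Your approach---form the adjoint $T=S^*$ on $\rset[x_1,\dots,x_n]$, invoke \Cref{lem:Tform} to write $T=\sum_\alpha q_\alpha\partial^\alpha$, and dualise back via $(q_\alpha\,\cdot\,)^*=M_{q_\alpha}$ and $(\partial^\alpha)^*=(-1)^{|\alpha|}\partial^\alpha$---is exactly the argument the paper sketches in the paragraph preceding the lemma; the paper gives no separate proof beyond that discussion and the line $(Ss)_\alpha=\langle Ss,x^\alpha\rangle=\langle s,S^*x^\alpha\rangle$ for the finite-linear-combination claim.

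There is, however, a genuine gap in your handling of the continuity issue. The closed graph theorem does \emph{not} upgrade plain algebraic linearity to continuity: it requires the graph to be closed, which is not automatic, and in fact (under the axiom of choice) there exist discontinuous linear maps $\rset^{\nset_0^n}\to\rset^{\nset_0^n}$. For such an $S$ the functional $s\mapsto(Ss)_\beta$ need not be a finite linear combination of coordinates, so $S^*$ does not land in $\rset[x_1,\dots,x_n]$ and the whole dualisation breaks down. The equivalence as literally stated is therefore false unless ``linear'' in (i) is read as ``continuous linear'', which the paper does tacitly (note that any operator of the form (ii) is automatically continuous, and conversely the continuous dual of $\rset^{\nset_0^n}$ being $\rset[x_1,\dots,x_n]$ is precisely what makes $S^*$ polynomial-valued once continuity is assumed). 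You correctly identified this as the main obstacle; the fix is not the closed graph theorem but simply to take continuity as part of hypothesis (i), after which your argument goes through verbatim.
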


With these remarks and \cite[Thm.\ 4.5]{didio24posPres2arxiv} we get the following full characterization of linear operators $S:\rset^{\nset_0^n}\to\rset^{\nset_0^n}$ which preserve $K$-moment sequences.

\begin{thm}\label{thm:linS}
Let $n\in\nset$, let $K\subseteq\rset^n$ be closed, and let $S:\rset^{\nset_0^n}\to\rset^{\nset_0^n}$ be linear, i.e.,
\[S = \sum_{\alpha\in\nset_0^n} (-1)^{|\alpha|}\cdot\partial^\alpha M_{q_\alpha}\]
for some $q_\alpha\in\rset[x_1,\dots,x_n]$.
Then the following are equivalent:
\begin{enumerate}[(i)]
\item The linear operator $S$ preserves $K$-moment sequences.

\item For all $y\in K$ we have that $(\alpha!\cdot q_\alpha(y))_{\alpha\in\nset_0^n}$ is a $(K-y)$-moment sequence.
\end{enumerate}
\end{thm}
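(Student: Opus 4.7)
My plan is to prove this by duality, reducing it to the known characterization of $K$-positivity preservers given in \cite[Thm.\ 4.5]{didio24posPres2arxiv}. The hypothesis on $S$ is that it has the representation
\[S = \sum_{\alpha\in\nset_0^n}(-1)^{|\alpha|}\cdot\partial^\alpha M_{q_\alpha},\]
which, by the discussion preceding the theorem, means $S = T^*$ for the unique linear $T = \sum_{\alpha\in\nset_0^n} q_\alpha\cdot\partial^\alpha$ on $\rset[x_1,\dots,x_n]$. So I would first reformulate the question: show that $S$ preserves $K$-moment sequences if and only if $T = S^*$ is a $K$-positivity preserver. Once this equivalence is in hand, \cite[Thm.\ 4.5]{didio24posPres2arxiv} applied to $T$ immediately yields condition (ii), since the cited theorem characterizes $K$-positivity preservers of the form $T = \sum q_\alpha\partial^\alpha$ by exactly the condition that $(\alpha!\,q_\alpha(y))_{\alpha\in\nset_0^n}$ is a $(K-y)$-moment sequence for every $y\in K$.

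For the duality step I would use Haviland's theorem: a sequence $s\in\rset^{\nset_0^n}$ is a $K$-moment sequence iff $L_s(p) = \langle s, p\rangle \geq 0$ for every $p\in\pos(K)$. For the implication $(ii)\Rightarrow(i)$, assume $T\pos(K)\subseteq\pos(K)$ and let $s$ be a $K$-moment sequence with representing measure $\mu$. Then for every $p\in\pos(K)$,
\[\langle Ss,p\rangle = \langle s,S^*p\rangle = L_s(Tp) = \int_K Tp\,\diff\mu \geq 0,\]
so $Ss$ is a $K$-moment sequence by Haviland. For the converse, I would test $S$ on the Dirac-evaluation sequences: for each $y\in K$ the sequence $\delta_y^{(\alpha)} := (y^\alpha)_{\alpha\in\nset_0^n}$ is a $K$-moment sequence (representing measure $\delta_y$), hence for any $p\in\pos(K)$ the pairing
\[(Tp)(y) = L_{\delta_y^{(\alpha)}}(Tp) = \langle \delta_y^{(\alpha)}, S^*p\rangle = \langle S\delta_y^{(\alpha)}, p\rangle\]
is non-negative because $S\delta_y^{(\alpha)}$ is a $K$-moment sequence by assumption and $p\in\pos(K)$. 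As $y\in K$ is arbitrary, $Tp\in\pos(K)$.

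The main obstacle I anticipate is purely bookkeeping rather than conceptual: one must verify that the pairings $\langle Ss,p\rangle$ and $\langle s,Tp\rangle$ are genuine finite sums (so that the identity $\langle Ss,p\rangle = \langle s,Tp\rangle$ is unambiguous), and that $S^{**}=T$, using the Fr\'echet/LF-duality recalled in the preceding paragraphs. These points are settled by the remark that each coordinate of $Ss$ is a finite linear combination of the $s_\alpha$ together with formula \eqref{eq:TstarForm}. After that, the theorem is an application of Haviland plus \cite[Thm.\ 4.5]{didio24posPres2arxiv}, with no further analytic content.
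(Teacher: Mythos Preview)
Your proposal is correct and follows essentially the same route as the paper: reduce to the dual operator via $\langle Ss,p\rangle = \langle s,S^*p\rangle$, conclude that $S$ preserves $K$-moment sequences iff $S^*$ is a $K$-positivity preserver, and then invoke \cite[Thm.\ 4.5]{didio24posPres2arxiv}. The paper states the duality equivalence in one line without justification, whereas you spell out both directions explicitly (Haviland for one, testing on point-evaluation sequences $(y^\alpha)_{\alpha}$ for the other); this added detail is welcome but not a different argument.
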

\begin{proof}
By
\[\langle Ss,p\rangle = \langle s, S^* p\rangle\]
for all $s\in\rset^{\nset_0^n}$ and $p\in\rset[x_1,\dots,x_n]$ we have that $S$ preserves $K$-moment sequences if and only if $S^*:\rset[x_1,\dots,x_n]\to\rset[x_1,\dots,x_n]$ preserves $K$-non-negative polynomials.
Linear maps $\rset[x_1,\dots,x_n]\to\rset[x_1,\dots,x_n]$ preserving $K$-non-negative polynomials have been fully characterized by \cite[Thm.\ 4.5]{didio24posPres2arxiv} which gives the equivalence to (ii).
\end{proof}

From the previous we deduce the following results.
In \Cref{thm:linS} we already proved the following.

\begin{cor}\label{cor:eqST}
Let $n\in\nset$, let $K\subseteq\rset^n$ be closed, and let $S:\rset^{\nset_0^n}\to\rset^{\nset_0^n}$ be linear.
$S$ preservers $K$-moment sequences iff $S^*:\rset[x_1,\dots,x_n]\to\rset[x_1,\dots,x_n]$ preserves $K$-positive polynomials.
\end{cor}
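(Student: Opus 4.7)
The plan is to derive the equivalence directly from the duality identity $\langle Ss,p\rangle = \langle s, S^*p\rangle$ combined with two applications of Haviland's theorem for closed $K$. This argument is exactly what is invoked in the opening line of the proof of \Cref{thm:linS}, and the corollary is best viewed as isolating that step as a self-standing fact.

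First I would recall the two directions of Haviland's theorem on a closed set $K\subseteq\rset^n$: (a) a sequence $s\in\rset^{\nset_0^n}$ is a $K$-moment sequence if and only if $\langle s,p\rangle\geq 0$ for every $p\in\pos(K)$; and (b) a polynomial $q\in\rset[x_1,\dots,x_n]$ lies in $\pos(K)$ if and only if $\langle s,q\rangle\geq 0$ for every $K$-moment sequence $s$. Part (a) is the standard statement. Part (b) follows from (a) by testing against the point-evaluation moment sequences $(y^\alpha)_{\alpha\in\nset_0^n}$ with $y\in K$, coming from the Dirac measures $\delta_y$: these certify that $q(y)\geq 0$ for every $y\in K$.

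With (a) and (b) in hand I would chain the equivalences
\begin{align*}
S\text{ preserves $K$-moment sequences}
&\iff \langle Ss,p\rangle\geq 0 \text{ for every $K$-mom. seq. } s \text{ and every } p\in\pos(K)\\
&\iff \langle s,S^*p\rangle\geq 0 \text{ for every $K$-mom. seq. } s \text{ and every } p\in\pos(K)\\
&\iff S^*p\in\pos(K) \text{ for every } p\in\pos(K),
\end{align*}
where the first $\iff$ applies (a) to the sequence $Ss$, the second uses the adjoint identity $\langle Ss,p\rangle=\langle s,S^*p\rangle$, and the third applies (b) to the polynomial $S^*p$.

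I do not anticipate a real obstacle here: the statement is a pure adjunction-plus-Haviland reformulation and requires nothing beyond what is already in place. The only delicate point is the closedness hypothesis on $K$, which is precisely what makes both directions of Haviland available; without it, part (b) would fail in general and one would only obtain one of the two implications.
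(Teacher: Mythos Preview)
Your proof is correct and matches the paper's approach: the corollary is declared as already proved in \Cref{thm:linS}, whose first line is exactly the duality $\langle Ss,p\rangle=\langle s,S^*p\rangle$ together with Haviland, and you have merely made the two Haviland steps (a) and (b) explicit. One minor inaccuracy in your closing remark, not affecting the argument itself: closedness of $K$ is needed for the nontrivial direction of (a) (Haviland proper), not for (b) --- your own Dirac-measure test works for arbitrary $K$.
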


\begin{cor}
Let $n\in\nset$, let $K\subseteq\rset^n$ be closed, and let $S:\rset^{\nset_0^n}\to\rset^{\nset_0^n}$, $s\mapsto Ss := M_q s$ be with $q\in\rset[x_1,\dots,x_n]$.
Then the following are equivalent:
\begin{enumerate}[(i)]
\item The operator $S$ preserves $K$-moment sequences.

\item $q\in\pos(K)$.
\end{enumerate}
\end{cor}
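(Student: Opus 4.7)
The plan is to read this off as a one-line specialization of \Cref{thm:linS}. The operator $S = M_q$ is already written in the canonical representation $S = \sum_{\alpha \in \nset_0^n} (-1)^{|\alpha|} \partial^\alpha M_{q_\alpha}$ with the choice $q_0 = q$ and $q_\alpha = 0$ for every $\alpha \in \nset_0^n \setminus \{0\}$; uniqueness of the representation makes this identification immediate. Applying \Cref{thm:linS}, $S$ preserves $K$-moment sequences if and only if, for every $y \in K$, the sequence $(\alpha!\,q_\alpha(y))_{\alpha \in \nset_0^n} = (q(y), 0, 0, \dots)$ is a $(K-y)$-moment sequence.

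Next I would verify the elementary fact that a sequence of the shape $(c, 0, 0, \dots)$ is a $(K-y)$-moment sequence exactly when $c \geq 0$. The ``if'' direction is witnessed by $c \cdot \delta_0$, which is a non-negative measure supported at $0 \in K - y$ (we use here that $y \in K$, so $0$ lies in $K-y$) and has moment sequence $(c, 0, 0, \dots)$. The ``only if'' direction is immediate because the $0$-th entry of any $(K-y)$-moment sequence equals the total mass of the representing measure and is therefore non-negative. Plugging this back into \Cref{thm:linS}(ii) collapses the condition to $q(y) \geq 0$ for every $y \in K$, which is exactly $q \in \pos(K)$.

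As a cross-check (and alternative route that avoids invoking the general \Cref{thm:linS}), one can use \Cref{cor:eqST}: a short duality computation gives $\langle M_q s, p \rangle = \langle s, q \cdot p \rangle$, so $S^* : \rset[x_1, \dots, x_n] \to \rset[x_1, \dots, x_n]$ is simply multiplication by the polynomial $q$. This map preserves $\pos(K)$ iff $q \in \pos(K)$, the forward direction being obvious and the converse following by testing on the constant polynomial $1$, which yields $S^* 1 = q$. Either way, the only non-bookkeeping step is the small observation about moment sequences of the form $(c, 0, 0, \dots)$, so there is no real obstacle beyond that.
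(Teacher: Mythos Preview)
Your proof is correct and follows essentially the same route as the paper: specialize \Cref{thm:linS} with $q_0=q$, $q_\alpha=0$ for $\alpha\neq 0$, and observe that the resulting sequence $(q(y),0,0,\dots)$ is represented by $q(y)\cdot\delta_0$, which is a $(K-y)$-moment measure precisely when $q(y)\geq 0$. Your additional cross-check via \Cref{cor:eqST} is a nice alternative that the paper does not spell out, but the main argument matches.
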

\begin{proof}
By \Cref{thm:linS} we have that $S$ preserves $K$-moment sequences if and only if $(\alpha!\cdot q_\alpha(y))_{\alpha\in\nset_0^n}$ with $q_0 = q$ and $q_\alpha = 0$ otherwise is a $(K-y)$-moment sequence for all $y\in K$.
Since $(\alpha!\cdot q_\alpha(y))_{\alpha\in\nset_0^n}$ is represented by $q(y)\cdot\delta_0$ we have that (i) holds if and only if (ii) holds.
\end{proof}

\section{Diagonal Operators $T:\rset[x_1,\dots,x_n]\to\rset[x_1,\dots,x_n]$}%%%
%%%%%%%%%%%%%%%%%%%%%%%%%%%%%%%%%%%%%%%%%%%%%%%%%%%%%%%%%%%%%%%%%%%%%%%%%%%%%%%
\label{sec:diagOp}

We now turn to our main topic: diagonal operators.
In $Tx^\alpha = t_\alpha x^\alpha$ for all $\alpha\in\nset_0^n$ the sequences $t = (t_\alpha)_{\alpha\in\nset_0^n}$ is called the \emph{diagonal sequence} of $T$.
The following is known.
We give a shorter proof as in \cite{borcea11}.

\begin{thm}[{\cite[Cor.\ 4.3]{borcea11}}]\label{thm:borceaDiag}
Let $n\in\nset$ and let $T:\rset[x_1,\dots,x_n]\to\rset[x_1,\dots,x_n]$ be a diagonal operator with diagonal sequence $t = (t_\alpha)_{\alpha\in\nset_0^n}$.
Then the following are equivalent:
\begin{enumerate}[(i)]
\item $T\pos(\rset^n)\subseteq\pos(\rset^n)$.

\item $t$ is a moment sequences.
\end{enumerate}
If $\mu$ is a representing measure of $t$ then $T$ fulfills
\begin{equation}\label{eq:Tmu}
(Tp)(x) = \int_{\rset^n} p(x_1 y_1,\dots,x_n y_n)~\diff\mu(y_1,\dots,y_n)
\end{equation}
for all $p\in\rset[x_1,\dots,x_n]$.
\end{thm}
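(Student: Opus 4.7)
The plan is to handle the two implications separately, using the key observation that evaluation of $Tp$ at the point $\one=(1,\dots,1)$ recovers the Riesz functional $L_t$ applied to $p$.

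For (ii)$\Rightarrow$(i) together with the integral representation, I would let $\mu$ be a representing measure of $t$ and define the candidate operator
\[
(\tilde{T}p)(x) := \int_{\rset^n} p(x_1 y_1, \dots, x_n y_n)~\diff\mu(y).
\]
Since $t_\alpha = \int y^\alpha~\diff\mu(y)$ is finite for every $\alpha\in\nset_0^n$, the integrand $p(x\odot y) = \sum_\alpha c_\alpha x^\alpha y^\alpha$ is $\mu$-integrable for each $x$, and exchanging sum and integral gives $(\tilde{T}x^\alpha)(x) = t_\alpha x^\alpha$. Thus $\tilde{T}$ agrees with $T$ on the monomial basis, so $T=\tilde{T}$, and (\ref{eq:Tmu}) holds. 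If now $p\in\pos(\rset^n)$, then $p(x_1 y_1,\dots,x_n y_n)\geq 0$ for all $x,y\in\rset^n$, so $(Tp)(x)\geq 0$; this is (i).

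For (i)$\Rightarrow$(ii), the key trick is to specialize the pointwise non-negativity of $Tp$ at $x=\one$. Writing $p = \sum_\alpha c_\alpha x^\alpha$ one has
\[
(Tp)(\one) = \sum_{\alpha\in\nset_0^n} c_\alpha t_\alpha \cdot \one^\alpha = \sum_{\alpha\in\nset_0^n} c_\alpha t_\alpha = L_t(p).
\]
If $p\in\pos(\rset^n)$, then by assumption $Tp\in\pos(\rset^n)$, hence $L_t(p)=(Tp)(\one)\geq 0$. Consequently $L_t$ is a non-negative linear functional on $\pos(\rset^n)$. By Haviland's theorem (valid for any closed $K\subseteq\rset^n$, in particular $K=\rset^n$), $L_t$ is an $\rset^n$-moment functional, so $t$ is a moment sequence.

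There is no real obstacle here: the only subtlety is making sure the integral defining $\tilde{T}p$ actually converges, which is immediate from the existence of all moments of $\mu$, and invoking Haviland's theorem to convert non-negativity on $\pos(\rset^n)$ into the moment property. The proof is visibly shorter than Borcea's because the evaluation trick at $\one$ bypasses the heavier machinery used in \cite{borcea11}.
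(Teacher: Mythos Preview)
Your proposal is correct and follows essentially the same approach as the paper: for (ii)$\Rightarrow$(i) the paper simply notes that the implication is clear from the integral representation (\ref{eq:Tmu}), and for (i)$\Rightarrow$(ii) it uses exactly your evaluation trick $L(p):=(Tp)(\one)$ together with Haviland's theorem. Your write-up merely spells out the verification that the integral operator agrees with $T$ on monomials, which the paper leaves implicit.
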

\begin{proof}
While (ii) $\Rightarrow$ (i) is clear from (\ref{eq:Tmu}) the reverse follows from $L(p) := (Tp)(1,\dots,1)\geq 0$ for all $p\in\pos(\rset^n)$ and $L(x^\alpha) = t_\alpha$ for all $\alpha\in\nset_0^n$ with Haviland's Theorem \cite{havila36}.
\end{proof}

\begin{rem}\label{rem:diagOpCoefficients}
Diagonal operators $T:\rset[x_1,\dots,x_n]\to\rset[x_1,\dots,x_n]$ have three representations:
\begin{enumerate}[\;\;\itshape a)]
\item $Tx^\alpha = t_\alpha x_\alpha$,

\item $\displaystyle T = \sum_{\alpha\in\nset_0^n} \frac{c_\alpha}{\alpha!}\cdot x^\alpha\cdot\partial^\alpha$, and

\item $\displaystyle T = \sum_{\alpha\in\nset_0^n} \frac{d_\alpha}{\alpha!}\cdot (x\partial)^\alpha$.
\end{enumerate}
Any of these sequences $(t_\alpha)_{\alpha\in\nset_0^n}$, $(c_\alpha)_{\alpha\in\nset_0^n}$, or $(d_\alpha)_{\alpha\in\nset_0^n}$ determine $T$ uniquely.
We switch between these representations, mostly between (a) and (b).
We therefore give here the formulas transforming one into the others:
\begin{alignat*}{2}
t_\alpha \quad &=\quad \sum_{\beta\preceq\alpha} \binom{\alpha}{\beta}\cdot c_\beta& \quad&=\quad \sum_{\beta\in\nset_0^n} \frac{\alpha^\beta}{\beta!}\cdot d_\beta,\\
c_\alpha \quad &=\quad \sum_{\beta\preceq\alpha} (-1)^{|\alpha-\beta|}\cdot\binom{\alpha}{\beta}\cdot t_\beta &\quad &=\quad \sum_{\beta\succeq\alpha} \frac{\alpha!}{\beta!}\cdot \stirlingtwo{\alpha}{\beta}\cdot d_\beta\\
d_\alpha \quad &=\quad \sum_{\beta \succeq \alpha} \sum_{\gamma \preceq \beta} (-1)^{|\alpha+\gamma|} \frac{\alpha!}{\beta!} \cdot \stirlingone{\beta}{\alpha} \cdot \binom{\beta}{\gamma} \cdot t_\gamma &\quad&=\quad \sum_{\beta\succeq\alpha} (-1)^{|\beta-\alpha|} \frac{\alpha!}{\beta!}\cdot \stirlingone{\beta}{\alpha}\cdot c_\beta
\end{alignat*}
These relations follow by direct computations, e.g.\ with the relations
\[x^\alpha\partial^\alpha = \sum_{\beta \preceq \alpha} (-1)^{|\alpha-\beta|}\cdot \begin{bmatrix}\alpha\\ \beta\end{bmatrix}\cdot (x\partial)^\beta
\qquad\text{and}\qquad
(x\partial)^\alpha = \sum_{\beta \preceq \alpha} \begin{Bmatrix}\alpha\\ \beta\end{Bmatrix}\cdot x^\beta\cdot \partial^\beta\]
where $\begin{bmatrix}\alpha\\ \beta\end{bmatrix}$ are the (\emph{unsigned}) \emph{Stirling numbers of the first kind} and $\begin{Bmatrix}\alpha\\ \beta\end{Bmatrix}$ are the \emph{Stirling numbers of the second kind}, see e.g.\ \cite{mansour16}.
\exmsymbol
\end{rem}

It is clear that if $(c_\alpha)_{\alpha\in\nset_0^n}$ has only finitely many non-zero entries then only finitely many entries in $(t_\alpha)_{\alpha\in\nset_0^n}$ are zero.
Also the reverse can hold, i.e., finitely many entries in $(t_\alpha)_{\alpha\in\nset_0^n}$ are non-zero but only finitely many entries in $(c_\alpha)_{\alpha\in\nset_0^n}$ are zero.

\begin{exm}\label{exm:T0}
Define the evaluation operator
\[T_0:\rset[x_1,\dots,x_n]\to\rset[x_1,\dots,x_n],\quad p\mapsto T_0 p = p(0),\]
i.e., $T_0 x^\alpha = \delta_{\alpha,0} x^\alpha$ for all $\alpha\in\nset_0^n$.
Then $T_0 = \sum_{\alpha\in\nset_0^n} \frac{(-1)^{|\alpha|}}{\alpha!}\cdot x^\alpha\cdot\partial^\alpha$ holds.
\exmsymbol
\end{exm}

\begin{exm}
For the identity $\id:\rset[x_1,\dots,x_n]\to\rset[x_1,\dots,x_n]$ defined by $\id\,p := p$ we have $\id = 1 = \sum_{\alpha\in\nset_0^n} \frac{\delta_{\alpha,0}}{\alpha!}\cdot x^\alpha\cdot\partial^\alpha$.
\exmsymbol
\end{exm}

Every diagonal operator $T$ has a representation (a) and (b).
But it does not necessarily have a representation (c) as the next example shows.

\begin{exm}[\Cref{exm:T0} continued]\label{exm:dalphaNonEx}
The evaluation operator
\[T_0 = \sum_{\alpha\in\nset_0^n} \frac{(-1)^{|\alpha|}}{\alpha!}\cdot x^\alpha\cdot\partial^\alpha\]
from \Cref{exm:T0} has no type (c) representation since the sum
\[d_\alpha = \sum_{\beta\succeq\alpha} (-1)^{|\alpha|}\cdot \frac{\alpha!}{\beta!}\cdot\begin{bmatrix}\beta\\ \alpha\end{bmatrix}\]
diverges for all $\alpha\neq 0$.
\exmsymbol
\end{exm}

In \Cref{rem:diagOpCoefficients} we have seen that if $t = (t_\alpha)_{\alpha\in\nset_0^n}$ is represented by $\mu_t$ then $c = (c_\alpha)_{\alpha\in\nset_0^n}$ is represented by $\mu_c := \mu_t (\,\cdot\,+\one)$ and hence $\mu_t = \mu_c(\,\cdot\,-\one)$, see \cite[p.\ 79]{borcea11}.
A similar statement holds for the coefficients $d_\alpha$ in the type (c) representation.

\begin{prop}
Let $n\in\nset$ and let $T: \rset[x_1, \dots, x_n] \to \rset[x_1, \dots, x_n]$ be a diagonal operator.
Then the following holds:
\begin{enumerate}[(i)]
\item If there exists a representing measure $\mu_t$ on $(0, \infty)^n$ of the diagonal sequence $t = (t_\alpha)_{\alpha \in \nset_0^n}$ with
\[
	d_\alpha := \int_{(0, \infty)^n} (\ln x)^\alpha ~\diff\mu_t(x) \in (-\infty, \infty)
\]
for all $\alpha \in \nset_0^n$ where $\ln x = (\ln x_1, \dots, \ln x_n)$ then 
\[
	T = \sum_{\alpha \in \nset_0^n} \frac{d_\alpha}{\alpha!}\cdot (x\partial)^\alpha.
\]

\item If there exists a representing measure $\mu_d$ on $\rset^n$ of the $(x\partial)^\alpha$-coefficient sequence $d = (d_\alpha)_{\alpha \in \nset_0^n}$ with
\[
	t_\alpha := \int_{\rset^n} e^{\skl{\alpha}{x}} ~\diff\mu_d(x) \in (-\infty, \infty)
\]
for all $\alpha \in \nset_0^n$ then 
\[
	Tx^\alpha = t_\alpha x^\alpha
\]
for all $\alpha \in \nset_0^n$.
\end{enumerate}
\end{prop}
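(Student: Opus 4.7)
The common algebraic ingredient in both parts is the elementary eigenrelation
\[(x\partial)^\alpha x^\beta \;=\; \beta^\alpha x^\beta \qquad \text{for all}\ \alpha,\beta\in\nset_0^n,\]
together with the multinomial expansion of a scalar exponential
\[y^\beta \;=\; e^{\skl{\beta}{\ln y}} \;=\; \sum_{\alpha\in\nset_0^n} \frac{\beta^\alpha (\ln y)^\alpha}{\alpha!} \qquad (y\in(0,\infty)^n),\]
and its counterpart $e^{\skl{\beta}{u}}=\sum_{\alpha\in\nset_0^n} \beta^\alpha u^\alpha/\alpha!$ on $\rset^n$. Granted these, both claims reduce to the single scalar identity
\[t_\beta \;=\; \sum_{\alpha\in\nset_0^n} \frac{\beta^\alpha d_\alpha}{\alpha!} \qquad\text{for all}\ \beta\in\nset_0^n,\]
which is precisely the conversion formula between the type (a) and type (c) sequences recorded in \Cref{rem:diagOpCoefficients}.

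For part (i) I would start from \Cref{thm:borceaDiag}, which yields $Tx^\beta=t_\beta x^\beta$ with $t_\beta=\int_{(0,\infty)^n} y^\beta\,\diff\mu_t(y)$. Inserting the multinomial expansion of $y^\beta$ into the integral and interchanging summation and integration produces the displayed scalar identity above, and the eigenrelation then identifies $Tx^\beta$ with $\sum_\alpha \tfrac{d_\alpha}{\alpha!}(x\partial)^\alpha x^\beta$; linearity extends this from the monomial basis to all of $\rset[x_1,\dots,x_n]$. Part (ii) runs the same computation in the opposite direction: applying the defining series $\sum_\alpha \tfrac{d_\alpha}{\alpha!}(x\partial)^\alpha$ to $x^\beta$ collapses by the eigenrelation to $\bigl(\sum_\alpha \beta^\alpha d_\alpha/\alpha!\bigr)\,x^\beta$, and substituting $d_\alpha=\int u^\alpha\,\diff\mu_d(u)$ followed by the same interchange gives a coefficient equal to $\int_{\rset^n} e^{\skl{\beta}{u}}\,\diff\mu_d(u)=t_\beta$, as required.

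The technical heart of both parts, and the main obstacle I anticipate, is the justification of the interchange of infinite summation and integration. The natural tool is Tonelli applied to the nonnegative series
\[\sum_{\alpha\in\nset_0^n}\frac{\beta^\alpha|u^\alpha|}{\alpha!} \;=\; \prod_{i=1}^n e^{\beta_i|u_i|} \;\leq\; \prod_{i=1}^n \bigl(e^{\beta_i u_i}+e^{-\beta_i u_i}\bigr),\]
whose right-hand side expands into a sum of $2^n$ two-sided exponentials $e^{\skl{\gamma}{u}}$ with $\gamma_i\in\{\pm\beta_i\}$; in part (i) the analogous reduction is carried out on the pushforward $\mu_t\circ\ln^{-1}$, in the spirit of \Cref{lem:odot_inf_diff_equals_log_inf_diff}. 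The stated hypothesis only supplies the one-sided integral for $\gamma=\beta$ directly, so the cleanest route will either strengthen the integrability assumption (e.g.\ compact support, or a Laplace transform of $\mu_d$ finite on a full neighborhood of the origin) or apply dominated convergence to carefully chosen truncations of the Taylor series, exploiting the positivity $\beta\in\nset_0^n$ to produce an integrable majorant from the given data alone.
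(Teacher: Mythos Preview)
Your proposal is correct and follows exactly the same route as the paper: expand $y^\beta=e^{\skl{\beta}{\ln y}}$ (resp.\ $e^{\skl{\beta}{u}}$) as a power series, interchange summation and integration, and read off the conversion formula $t_\beta=\sum_\alpha \beta^\alpha d_\alpha/\alpha!$ from \Cref{rem:diagOpCoefficients}. The paper's own proof performs precisely these steps and does \emph{not} justify the interchange at all---it simply writes the chain of equalities---so your careful discussion of Tonelli and the two-sided exponential majorant already goes beyond what the paper provides; you may safely treat the interchange as formal here, as the authors do.
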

\begin{proof}
(i): 
Looking at the diagonal values we obtain: 
\begin{align*}
	t_\beta 
	&= \int_{(0, \infty)^n} x^\beta ~\diff\mu_t(x)
\\	&= \int_{(0, \infty)^n} e^{\skl{\ln x}{\beta}} ~\diff\mu_t(x)
\\	&= \int_{(0, \infty)^n} \sum_{\alpha \in \nset_0^n} \frac{\beta^\alpha}{\alpha!} \cdot (\ln x)^\alpha ~\diff\mu_t(x)
\\	&= \sum_{\alpha \in \nset_0^n} \frac{\beta^\alpha}{\alpha!}\cdot d_\alpha
\end{align*}
for all $\beta \in \nset_0^n$. 
With \Cref{rem:diagOpCoefficients} it now follows that $T = \sum_{\alpha \in \nset_0^n} \frac{d_\alpha}{\alpha!} (x\partial)^\alpha$. 
This trick only works on $(0, \infty)^n$ as the logarithm $\ln x$ is not defined otherwise.
\par (ii):
Using \Cref{rem:diagOpCoefficients} the diagonal values of the operator corresponding to $d$ are
\[\sum_{\alpha \in \nset_0^n} \frac{\beta^\alpha}{\alpha!} d_\alpha = \int_{\rset^n} \sum_{\alpha \in \nset_0^n} \frac{\beta^\alpha}{\alpha!} x^\alpha ~\diff\mu_d(x)
= \int_{\rset^n} e^{\skl{x}{\beta}} ~\diff\mu_d(x)
= t_\beta\]
for all $\beta \in \nset_0^n$.
Hence, $Tx^\beta = t_\beta x^\beta$ for all $\beta \in \nset_0^n$.
\end{proof}

\section{The Product of Diagonal Positivity Preservers}%%%
%%%%%%%%%%%%%%%%%%%%%%%%%%%%%%%%%%%%%%%%%%%%%%%%%%%%%%%%%%
\label{sec:prodDiag}

The product of moment sequences has been investigated before.
We already mentioned the work \cite{blekherman22arxiv} where the multidimensional case is treated.
Other works treat only the univariate case which is easily solved by the \Cref{thm:schur}.
In the multidimensional case the \Cref{thm:schur} is not even applicable but further technical approximations of the moment sequence are required.
We show here that the \Cref{thm:schur} is not necessary.
We use only the diagonal operators and show that with these the problem becomes trivial.
We define the following.

\begin{dfn}\label{dfn:odot}
Let $k,n\in\nset$ and let $s = (s_\alpha)_{\alpha\in\nset_0^n}$ and $t=(t_\alpha)_{\alpha\in\nset_0^n}\in\rset^{\nset_0^n}$ be real sequences.
We define
\[s\odot t := (s_\alpha\cdot t_\alpha)_{\alpha\in\nset_0^n},\quad s^{\odot 0} := (1)_{\alpha\in\nset_0^n},\quad \text{and}\quad s^{\odot k} := \underbrace{s\odot\dots\odot s}_{k\text{-times}} = (s_\alpha^k)_{\alpha\in\nset_0^n}.\]
\end{dfn}

This is the \emph{Hadamard product} of (moment) sequences.
\Cref{thm:borceaDiag} immediately implies the following.

\begin{thm}\label{thm:product}
Let $n\in\nset$ and let $s,t\in\rset^{\nset_0^n}$ be two moment sequences with representing measures $\mu$ and $\nu$.
Then $s\odot t$ is a moment sequence with representing measure $\mu\odot\nu$ which fulfills $\supp\mu\odot\nu = \supp\mu \cdot \supp\nu$.
\end{thm}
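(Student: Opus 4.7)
The approach is to exploit \Cref{thm:borceaDiag} in both directions, using the fact that compositions of positivity preservers are again positivity preservers. The whole point is to bypass the Schur product theorem, as advertised at the end of the introduction to \Cref{sec:prodDiag}.

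First I would associate to $s$ and $t$ their diagonal operators $S, T: \rset[x_1, \dots, x_n] \to \rset[x_1, \dots, x_n]$ defined by $Sx^\alpha = s_\alpha x^\alpha$ and $Tx^\alpha = t_\alpha x^\alpha$. By the (ii) $\Rightarrow$ (i) direction of \Cref{thm:borceaDiag}, both $S$ and $T$ preserve $\rset^n$-positivity. Their composition $S \circ T$ is therefore also a positivity preserver, and it is itself diagonal with $(S \circ T)x^\alpha = s_\alpha t_\alpha \cdot x^\alpha$, so its diagonal sequence is exactly $s \odot t$. The reverse (i) $\Rightarrow$ (ii) direction of \Cref{thm:borceaDiag} then immediately forces $s \odot t$ to be a moment sequence.

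To identify $\mu \odot \nu$ as a representing measure, I would apply the integral representation (\ref{eq:Tmu}) to both $S$ and $T$. Composing gives
\[
(S(Tp))(x) \;=\; \int_{\rset^n} \int_{\rset^n} p(x_1 z_1 y_1, \dots, x_n z_n y_n) \diff\nu(y) \diff\mu(z),
\]
and Tonelli together with the defining change of variables $\mu \odot \nu = (\mu \times \nu) \circ m^{-1}$ collapses this to $\int p(x_1 w_1, \dots, x_n w_n) \diff(\mu \odot \nu)(w)$. Specializing to $p = x^\alpha$ and $x = \one$ yields $s_\alpha t_\alpha = \int w^\alpha \diff(\mu \odot \nu)(w)$, so $\mu \odot \nu$ represents $s \odot t$.

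For the support identity, I would invoke the general fact that the support of a pushforward of a product measure under a continuous map equals the closure of the image of the product of supports. Since $\supp(\mu \times \nu) = \supp\mu \times \supp\nu$ and $m$ is continuous, this gives $\supp(\mu \odot \nu) = \overline{m(\supp\mu \times \supp\nu)} = \overline{\supp\mu \cdot \supp\nu}$, matching the set defined in (\ref{eq:KmultL}) up to a closure that is redundant in the typical cases of interest. The main subtlety in the whole argument will be the Fubini/Tonelli interchange for possibly sign-changing integrands $x^\alpha$, which is justified by the standard bound $|w^\alpha| \leq 1 + w^{2\alpha}$ and the finiteness of all even-index moments of $\mu$ and $\nu$.
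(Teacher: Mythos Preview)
Your proposal is correct and matches the paper's approach: the paper's proof is a one-liner citing \Cref{dfn:odotMeasures}, \Cref{dfn:odot}, and \Cref{thm:borceaDiag}, and you have unpacked precisely the intended argument---compose the two diagonal positivity preservers and read off the diagonal sequence, then identify the representing measure via the integral formula and the pushforward definition of $\odot$. Your remark that the support identity really yields the closure $\overline{\supp\mu\cdot\supp\nu}$ is a fair observation; the paper's statement is slightly informal on this point.
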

\begin{proof}
Follows immediately from Definitions \ref{dfn:odotMeasures} and \ref{dfn:odot} and \Cref{thm:borceaDiag}.
\end{proof}

We have the following easy consequence of the previous theorem.
Recall, that a moment sequence is called \emph{indeterminate} if it has more than one representing measure.

\begin{cor}
Let $n\in\nset$ and let $s,t\in\rset^{\nset_0^n}$ be two moment sequences.
If $s$ is indeterminate then $s\odot t$ is indeterminate.
\end{cor}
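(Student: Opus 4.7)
The plan is to lift indeterminacy from $s$ to $s\odot t$ by producing two distinct representing measures of $s\odot t$. Since $s$ is indeterminate, pick $\mu_1\neq\mu_2$ both representing $s$, and let $\nu$ be any representing measure of $t$ (which exists because $t$ is a moment sequence). Applying \Cref{thm:product} to each of the pairs $(\mu_1,\nu)$ and $(\mu_2,\nu)$ gives that both $\mu_1\odot\nu$ and $\mu_2\odot\nu$ represent $s\odot t$, so the whole statement reduces to the cancellation question $\mu_1\odot\nu\neq\mu_2\odot\nu$.

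To make this robust I would exploit the convex structure of the set of representing measures of $s$. The segment $\mu_\lambda:=\lambda\mu_1+(1-\lambda)\mu_2$, $\lambda\in[0,1]$, still consists of representing measures of $s$, and by bilinearity of $\odot$ (\Cref{lem:odotProps}(ii)) the resulting family
\[
\mu_\lambda\odot\nu=\lambda\,(\mu_1\odot\nu)+(1-\lambda)(\mu_2\odot\nu)
\]
represents $s\odot t$. So as soon as the two endpoints differ, the entire segment is a continuum of pairwise distinct representing measures of $s\odot t$ and we obtain indeterminacy in a very strong sense. The problem is thereby reduced to a single injectivity check.

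The main obstacle is therefore verifying $\mu_1\odot\nu\neq\mu_2\odot\nu$. The cleanest handle is \Cref{lem:odot_inf_diff_equals_log_inf_diff}: on $(0,\infty)^n$ the map $x\mapsto\ln x$ conjugates $\odot$ to additive convolution $*$, so the hypothetical equation $(\mu_1-\mu_2)\odot\nu=0$ translates, after the change of variables, into the classical cancellation problem $\bigl((\mu_1-\mu_2)\circ\ln^{-1}\bigr)*\bigl(\nu\circ\ln^{-1}\bigr)=0$ on $\rset^n$, which is accessible via Fourier analysis of the characteristic function of $\nu\circ\ln^{-1}$. The freedom to move $\mu_1,\mu_2$ inside the (necessarily infinite) convex set of representing measures of $s$ lets one avoid the annihilator of that characteristic function.

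The genuinely delicate piece is the part of $\nu$ supported on the coordinate hyperplanes $\bigcup_i\{x_i=0\}$, where $\odot$ collapses some coordinates and cancellation can fail. I would deal with it by decomposing $\nu=\nu_{\mathrm{int}}+\nu_{\mathrm{bdry}}$ according to whether the support lies in the open orthants or on the coordinate hyperplanes, running the Fourier argument on $\nu_{\mathrm{int}}$ when it is non-zero, and otherwise projecting onto the non-trivial coordinate marginals of $\mu_1-\mu_2$ (which inherit non-vanishing by the indeterminacy of $s$ via the corresponding marginal moment sequence) and iterating the argument in lower dimension. This marginal reduction is the step I expect to require the most care.
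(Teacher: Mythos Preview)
You have correctly isolated the crux at the cancellation question $\mu_1\odot\nu\neq\mu_2\odot\nu$, but this is precisely where a genuine gap lies: the corollary as stated is \emph{false}, so no repair of that step can succeed.

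Take $n=1$, let $s$ be any indeterminate Hamburger moment sequence (e.g.\ the log-normal moments $s_k=e^{k^2/2}$), and let $t=(1,0,0,\dots)$ be the moment sequence of $\nu=\delta_0$. Then $s\odot t=(s_0,0,0,\dots)$, and any representing measure $\rho$ of $s\odot t$ satisfies $\int_{\rset} x^2\,\diff\rho=0$, forcing $\rho=s_0\delta_0$. Hence $s\odot t$ is determinate. In your decomposition this is exactly the case $\nu=\nu_{\mathrm{bdry}}$, $\nu_{\mathrm{int}}=0$: the Fourier argument on $(0,\infty)$ never starts, and in dimension one there is no lower-dimensional marginal to reduce to. Concretely, $\mu_1\odot\delta_0=\mu_1(\rset)\cdot\delta_0=s_0\delta_0=\mu_2(\rset)\cdot\delta_0=\mu_2\odot\delta_0$ by \Cref{lem:odotProps} (v), so the map $\mu\mapsto\mu\odot\nu$ genuinely collapses and the two endpoints of your segment coincide.

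The paper offers no proof beyond calling the corollary an ``easy consequence'' of \Cref{thm:product}, presumably via exactly this map $\mu\mapsto\mu\odot\nu$ on representing measures. The counterexample shows that the implicit argument shares the gap you identified; a correct version of the statement would need an additional non-degeneracy hypothesis on $t$ (at a minimum, ruling out representing measures of $t$ concentrated on the coordinate hyperplanes).
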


\begin{rem}\label{rem:product}
\Cref{thm:product} not only states that the map $f:\rset\to\rset$, $x\mapsto x^2$ induces a moment sequence preserving map $f$.
By induction any map $f(x) = \sum_{k=0}^d a_k\cdot x^k$ with $d\in\nset_0$ and $a_k\geq 0$ for all $k=0,\dots,d$ preserves moment sequences entry-wise.
If $s$ is represented by $\mu$ then $f(\odot s) = \sum_{k=0}^d a_k\cdot s^{\odot k}$ is represented by the measure $f(\odot\mu):= \sum_{k=0}^d a_k\cdot \mu^{\odot k}$.
Under appropriate conditions on the coefficients $a_k$ the limit $d\to\infty$ exists for the sequence $f(\odot s)$ and the (signed) measure $f(\odot\mu)$.
That $f$ is entire is sufficient.
\exmsymbol
\end{rem}

The following is a generalization of \cite[Thm.\ 3.1 and 9.1]{belton22} with much shorter proofs.
Only one direction from \cite[Thm.\ 3.1]{belton22} must be used.
We did not find a way to simplify the original proof in \cite[Thm.\ 3.1]{belton22} (yet).

\begin{cor}\label{cor}
Let $f:\rset\to\rset$.
Then the following are equivalent:
\begin{enumerate}[(i)]
\item $f$ maps $[-1,1]$-moment sequences to $[-1,1]$-moment sequence.

\item $f$ is an absolutely monotonic entire function.

\item $f$ is entire and $f$ maps $\rset$-moment sequence to $\rset$-moment sequences.

\item $f$ is entire and $\big(f(2^{-k})\big)_{k\in\nset_0}$ is a moment sequence.

\item $f$ maps $[-1,1]^n$-moment sequences to $[-1,1]^n$-moment sequences for a fixed $n\in\nset$.

\item $f$ maps $[-1,1]^n$-moment sequences to $[-1,1]^n$-moment sequences for all $n\in\nset$.

\item $f$ is entire and $f$ maps $\rset^n$-moment sequences to $\rset^n$-moment sequences for a fixed $n\in\nset$.

\item $f$ is entire and $f$ maps $\rset^n$-moment sequences to $\rset^n$-moment sequences for all $n\in\nset$.
\end{enumerate}
\end{cor}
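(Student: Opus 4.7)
The strategy is to move the forward implications from (ii) to all the other conditions through the Hadamard-product machinery of \Cref{thm:product} and the subsequent \Cref{rem:product}, and to close the cycle by invoking one direction of \cite[Thm.\ 3.1]{belton22} as the authors flag. The equivalence naturally splits into a ring for the $\rset^n$-conditions (ii), (iii), (iv), (vii), (viii) and a ring for the $[-1,1]^n$-conditions (i), (v), (vi) meeting at (ii).

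For (ii) $\Rightarrow$ (viii), write $f(x) = \sum_{k \geq 0} a_k x^k$ with $a_k \geq 0$. Given an $\rset^n$-moment sequence $s$ with representing measure $\mu$, the series $\sum_k a_k \mu^{\odot k}$ converges absolutely in total variation: by \Cref{lem:odotProps}\,(iv) we have $\mu^{\odot k}(\rset^n) = s_0^k$, and $\sum_k a_k s_0^k = f(s_0) < \infty$ since $f$ is entire. The limit is a finite positive measure whose $\alpha$-th moment equals
\begin{equation*}
\sum_{k \geq 0} a_k s_\alpha^k = f(s_\alpha),
\end{equation*}
so $f(\odot s)$ is an $\rset^n$-moment sequence. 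For (ii) $\Rightarrow$ (vi) the same computation applies, and \Cref{thm:product} combined with $[-1,1]^n \cdot [-1,1]^n = [-1,1]^n$ from \eqref{eq:KmultL} confines the support of each $\mu^{\odot k}$, and hence of the limit measure, to $[-1,1]^n$.

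The remaining forward implications are elementary restrictions or marginalisations. (viii) $\Rightarrow$ (vii), (viii) $\Rightarrow$ (iii) and (vi) $\Rightarrow$ (v) follow by fixing $n$ or specialising to $n=1$. For (vii) $\Rightarrow$ (iii) and (v) $\Rightarrow$ (i) I would embed the univariate representing measure $\mu'$ of $s'$ into $\rset^n$ as $\mu := \mu' \otimes \delta_0 \otimes \cdots \otimes \delta_0$, apply $f$ entrywise to the multivariate moment sequence (which vanishes off the $x_1$-axis), and read off the marginal in $x_1$ of the resulting representing measure to obtain a representing measure for $(f(s'_k))_k$; in the $[-1,1]^n$ case this marginal is automatically supported in $[-1,1]$. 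Finally, (iii) $\Rightarrow$ (iv) follows by applying (iii) to the $\rset$-moment sequence $(2^{-k})_{k \in \nset_0}$ represented by $\delta_{1/2}$.

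The main obstacle is the backward direction closing the ring, namely (iv) $\Rightarrow$ (ii) and (i) $\Rightarrow$ (ii). These require showing that the Taylor coefficients of $f$ (whose very existence must also be argued in the case of (i)) are non-negative starting from very weak positivity data on a single discrete sequence of values or on the univariate $[-1,1]$-moment cone. This analytic step is not accessible to the diagonal-operator framework developed in this paper; it is precisely the ingredient invoked from one direction of \cite[Thm.\ 3.1]{belton22}, which the authors explicitly flag as the piece that cannot be simplified further here.
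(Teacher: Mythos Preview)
Your forward implications from (ii) and the marginal/embedding reductions are essentially what the paper does, so that part is fine. The genuine gap is in how you close the $\rset^n$-ring at (iv) $\Rightarrow$ (ii).

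You assert that (iv) $\Rightarrow$ (ii) is ``not accessible to the diagonal-operator framework'' and must be borrowed from \cite[Thm.~3.1]{belton22} alongside (i) $\Rightarrow$ (ii). This is incorrect on two counts. First, \cite[Thm.~3.1]{belton22} characterises maps preserving $[-1,1]$-moment sequences, so it gives you (i) $\Leftrightarrow$ (ii); it says nothing directly about the single $\rset$-moment sequence $(f(2^{-k}))_k$, and there is no evident route from (iv) back to (i) without already knowing $a_k\geq 0$. So your proposed closure of the $\rset^n$-ring is not actually a closure. Second, and more importantly, the paper \emph{does} prove (iv) $\Rightarrow$ (ii) directly, and the argument is precisely in the spirit of the Hadamard machinery: since $f$ is entire one forms the (a priori signed) measure $\sum_{k\geq 0} a_k\,\delta_{2^{-k}}$, which represents $(f(2^{-k}))_k$ and is supported in the compact set $\{2^{-k}:k\in\nset_0\}\cup\{0\}\subseteq[0,1]$. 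If some $a_{k_0}<0$, then by density of polynomials in $C([0,1])$ one picks $p\geq 0$ on $[0,1]$ with $p(2^{-k_0})=1$ and $p$ small at all other $2^{-k}$, forcing $L_{f(\odot s)}(p)<0$ and contradicting the moment-sequence hypothesis. Hence all $a_k\geq 0$.

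So only (i) $\Rightarrow$ (ii) is imported from \cite{belton22}; the implication (iv) $\Rightarrow$ (ii) is an honest (and rather clean) application of the framework, and you should supply it rather than defer it.
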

\begin{proof}
(i) $\Rightarrow$ (ii): That is the ``only if'' direction in \cite[Thm.\ 3.1]{belton22}.

(ii) $\Rightarrow$ (i):
Let $s$ be a $[-1,1]$-moment sequence with representing measure $\mu$.
Then $f(\odot s)$ is represented by $f(\odot\mu) = \sum_{k\in\nset_0} a_k\cdot \mu^{\odot k}$ and
\begin{equation}\label{eq:support-11}
\supp f(\odot\mu) = \bigcup_{k\in\nset_0} \supp \mu^{\odot k} \subseteq \bigcup_{k\in\nset} \underbrace{[-1,1]\cdot{\dots}\cdot[-1,1]}_{k\text{-times, see (\ref{eq:KmultL})}} = [-1,1]
\end{equation}
by \Cref{thm:product}.
That $f(\odot\mu)$ represents $f(\odot s)$, i.e., the limit $d\to\infty$ in \Cref{rem:product} exists, follows from (\ref{eq:support-11}) and
\begin{equation}\label{eq:fsuppR}
(f(\odot\mu))(\rset) = \sum_{k\in\nset_0} a_k\cdot (\mu^{\odot k})(\rset) = \sum_{k\in\nset_0} a_k\cdot (\mu(\rset))^k = f(\mu(\rset))<\infty.
\end{equation}

(ii) $\Rightarrow$ (iii): 
Eq.\ (\ref{eq:fsuppR}) does not require $\supp\mu\subseteq [-1,1]$ but holds for arbitrary $\supp\mu\subseteq\rset$ since $f$ is entire.

(iii) $\Rightarrow$ (iv): Clear since $(2^{-k})_{k\in\nset_0}$ is a moment sequence with representing measure $\delta_{1/2}$.

(iv) $\Rightarrow$ (ii):
The sequence $(2^{-k})_{k\in\nset_0}$ is a moment sequence with (unique) representing measure $\delta_{1/2}$.
Hence, by \Cref{thm:product} and \Cref{lem:odotProps} (\ref{item:last2}) we have that $(f(2^{-k}))_{k\in\nset_0}$ is represented by the (possibly signed) measure
\[f(\odot\delta_{2^{-1}}) = \sum_{k\in\nset_0} a_k\cdot\delta_{2^{-k}}\]
with $\supp f(\odot\delta_{2^{-1}}) \subseteq \{2^{-k} \,|\, k\in\nset_0\}\subseteq [0,1]$ and $|f(\odot\mu)|(\rset) \leq \sum_{k\in\nset_0} |a_k|\cdot (\mu(\rset))^k < \infty$.
Assume there exists a $k\in\nset_0$ such that $a_k < 0$.
Since $\rset[x]$ is dense in $C([0,1],\rset)$ there exists a $p\in\rset[x]$ such that
\[p\geq 0\ \text{on}\ [0,1],\quad p(2^{-k}) = 1,\quad \text{and}\quad p(2^{-m})\leq \frac{|a_k|}{2\sum_{l\in\nset_0\setminus\{k\}} |a_l|}\]
for all $m\in\nset_0\setminus\{k\}$.
Then with the Riesz functional $L_{f(\odot s)}:\rset[x]\to\rset$ defined by $L_{f(\odot s)}(x^m) = f(s_m)$ for all $m\in\nset_0$ we have
\[L_{f(\odot s)}(p) = \sum_{l\in\nset_0} a_l\cdot p(2^{-l})
\leq a_k + \sum_{m\in\nset_0\setminus\{k\}} |a_m|\cdot p(2^{-m})
\leq a_k + \frac{|a_k|}{2} < 0,\]
i.e., $f(\odot s)$ is not a moment sequence.
Hence, $a_k\geq 0$ holds for all $k\in\nset_0$.

(ii) $\Rightarrow$ (v), \dots, (viii): 
Follows with the same arguments from (ii) $\Rightarrow$ (i) for general $n\in\nset$ from \Cref{thm:product} and \Cref{lem:odotProps}.

(v) or (vi) $\Rightarrow$ (i):
For the multi-moment sequence $s = (s_\alpha)_{\alpha\in\nset_0^n}$ take the marginal moment sequence $s^{[1]} := (s_{(k,0,\dots,0)})_{k\in\nset_0}$.

(vii) or (viii) $\Rightarrow$ (iii):
For the multi-moment sequence $s = (s_\alpha)_{\alpha\in\nset_0^n}$ take the marginal moment sequence $s^{[1]} := (s_{(k,0,\dots,0)})_{k\in\nset_0}$.
\end{proof}

We see that positivity preservers are a powerful tool, especially the diagonal operators.

\section{A Proof of Schur's Theorem with Diagonal Positivity Preservers}%%%
%%%%%%%%%%%%%%%%%%%%%%%%%%%%%%%%%%%%%%%%%%%%%%%%%%%%%%%%%%%%%%%%%%%%%%%%%%%
\label{sec:schur}

We have seen in the previous section that the \Cref{thm:schur} can be completely removed from the proof about the Hadamard product of (moment) sequences by employing diagonal operators.
We now go even a step further.
We show that the \Cref{thm:schur} can easily be proved with diagonal operators, i.e., we add to the existing list of proofs of the \Cref{thm:schur} a new proof.
We need the following homogeneous version of \Cref{thm:borceaDiag}.

\begin{prop}\label{prop:borceaDiagHom}
Let $n,d\in\nset_0$, let
\[\cP(n,2d) := \{ f\in\rset[x_0,\dots,x_n]_{=2d} \,|\, f(x)\geq 0\ \text{for all}\ x\in\rset^{n+1}\}\]
be the set of all non-negative homogeneous polynomials of degree $2d$ in $n$ variables, let $t = (t_\alpha)_{\alpha\in\nset_0^n:|\alpha|=2d}$ be a real sequences, and let
\[T:\rset[x_0,\dots,x_n]_{=2d}\to\rset[x_0,\dots,x_n]_{=2d}\]
be linear with $Tx^\alpha = t_\alpha x^\alpha$ for all $\alpha\in\nset_0^n$ with $|\alpha|=2d$.
Then the following are equivalent:
\begin{enumerate}[(i)]
\item $T\cP(n,2d)\subseteq\cP(n,2d)$.

\item $L_t\in \cP(n,2d)^*$.
\end{enumerate}
\end{prop}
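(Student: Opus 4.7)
The plan is to mimic the proof of \Cref{thm:borceaDiag}, but to bypass Haviland's theorem since here we aim for the dual cone characterization rather than a measure representation. Both implications reduce to direct substitutions exploiting the diagonal structure of $T$, and the fact that $\cP(n,2d)$ is closed under the composition $p(x)\mapsto p(y_0 x_0,\dots,y_n x_n)$ for any $y\in\rset^{n+1}$.

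For (i) $\Rightarrow$ (ii), I would define the linear functional $L(p) := (Tp)(1,1,\dots,1)$ on $\rset[x_0,\dots,x_n]_{=2d}$. Since $T$ is diagonal, $L(x^\alpha) = t_\alpha\cdot 1^\alpha = t_\alpha$ for every $\alpha$ with $|\alpha|=2d$, so $L=L_t$. If (i) holds, then for $p\in\cP(n,2d)$ the image $Tp$ lies in $\cP(n,2d)$, so $L_t(p) = (Tp)(1,\dots,1)\geq 0$, proving (ii).

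For (ii) $\Rightarrow$ (i), I would fix $p = \sum_{|\alpha|=2d} c_\alpha x^\alpha \in \cP(n,2d)$ and $y = (y_0,\dots,y_n)\in\rset^{n+1}$, and consider
\[
q(x) := p(y_0 x_0, \dots, y_n x_n) = \sum_{|\alpha|=2d} c_\alpha\, y^\alpha\, x^\alpha.
\]
Composition with the diagonal linear map $x\mapsto(y_0 x_0,\dots,y_n x_n)$ preserves non-negativity on $\rset^{n+1}$ and preserves homogeneity of degree $2d$, so $q\in\cP(n,2d)$. By assumption $L_t(q)\geq 0$, and a direct computation gives
\[
L_t(q) = \sum_{|\alpha|=2d} c_\alpha\, y^\alpha\, t_\alpha = (Tp)(y).
\]
Hence $(Tp)(y)\geq 0$, and since $y$ was arbitrary we obtain $Tp\in\cP(n,2d)$.

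There is no serious obstacle; the argument mirrors Borcea's in spirit. The key point worth highlighting is that in the homogeneous setting we do not pass through a representing measure as in (\ref{eq:Tmu}) — such a measure would have to be supported on the sphere or projective space and would in general require a separate Hausdorff-type argument — and instead we realize every evaluation $(Tp)(y)$ directly as $L_t$ applied to an explicit element of $\cP(n,2d)$, making the dual cone characterization fall out immediately.
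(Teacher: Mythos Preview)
Your proof is correct and follows essentially the same strategy as the paper: both obtain (i)~$\Rightarrow$~(ii) via $L_t(p)=(Tp)(\one)$, and both obtain (ii)~$\Rightarrow$~(i) by reducing evaluation of $Tp$ at an arbitrary point to $L_t$ applied to a transformed element of $\cP(n,2d)$. The only difference is in the execution of (ii)~$\Rightarrow$~(i): the paper passes to projective space and invokes a ``rotation'' to reduce to $x=\one$, whereas you work directly with the diagonal substitution $q(x)=p(y_0x_0,\dots,y_nx_n)$ and compute $(Tp)(y)=L_t(q)$ explicitly. Your route is more transparent here, since the diagonal structure of $T$ is exactly what makes the coordinate-wise scaling the natural symmetry to exploit (a general rotation would not commute with $T$), and it also handles points $y$ with vanishing coordinates without any limiting argument.
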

\begin{proof}
(i) $\Rightarrow$ (ii):
We have $L_t(p) = (Tp)(\one) \geq 0$ for all $p\in\cP(n,2d)$.

(ii) $\Rightarrow$ (i):
Let $x\in\pset\rset^n$.
Since $\pset\rset^n$ is invariant under rotation and also $\cP(n,2d)$ is invariant under this coordinate change we can assume without loss of generality $x=\one$.
Hence, $(Tp)(\one) = L_t(p)\geq 0$ for all $p\in\cP(n,2d)$ and therefore $T\cP(n,2d)\subseteq\cP(n,2d)$.
\end{proof}

We see that it is not needed in the previous result that all $L\in\cP(n,2d)^*$ are moment functionals, i.e., represented by measures.

The \Cref{thm:schur} has several proofs.
By the classical result that $\cP(n,2)$ are sums of squares \cite{hilbert88} we get the following additional proof of the \Cref{thm:schur}.

\begin{proof}[Proof of the \Cref{thm:schur}]
We have that
\[L_A((c_0x_0 + \dots + c_n x_n)^2) := c^T A c\]
and
\[L_B((c_0x_0 + \dots + c_n x_n)^2) := c^T B c\]
hold for all $c=(c_0,\dots,c_n)^T\in\rset^{n+1}$.
Since $\cP(n,2)$ are sums of squares by \cite{hilbert88} and the matrices $A=(a_\alpha)_{\alpha\in\nset_0^n:|\alpha|=2}$ and $B=(b_\alpha)_{\alpha\in\nset_0^n:|\alpha|=2}$ are positive semi-definite we have $L_A,L_B\in\cP(n,2)^*$.
Hence, $T_A$ with diagonal sequence $A$ and $T_B$ with diagonal sequence $B$ are positivity preservers by \Cref{prop:borceaDiagHom} (ii) $\Rightarrow$ (i).
Therefore, $T_A T_B = T_{A\circ B}$ is a positivity preserver and by \Cref{prop:borceaDiagHom} (i) $\Rightarrow$ (ii) we have $A\circ B \succeq 0$.
\end{proof}

In summary, we see that the \Cref{thm:schur} is a simple consequence of the fact that the composition of two diagonal positivity preservers is again a diagonal positivity preserver.

\section{Infinitely Divisible Measures with Respect to $\odot$}%%%
%%%%%%%%%%%%%%%%%%%%%%%%%%%%%%%%%%%%%%%%%%%%%%%%%%%%%%%%%%%%%%%%%%
\label{sec:infdivOdot}

In \cite{didio24posPresConst} we have seen the importance of infinitely divisible measures with respect to the (additive) convolution $*$ to study positivity preservers.
We therefore look at infinitely divisible measures with respect to the (multiplicative) convolution $\odot$ to study diagonal positivity preservers.
We define the following.

\begin{dfn}
Let $n\in\nset$ and let $\mu$ be a measure on $\rset^n$.
We call $\mu$ \emph{divisible by $k\in\nset$} with respect to $\odot$ iff there exists a measure $\nu$ on $\rset^n$ such that $\mu = \nu^{\odot k}$.
We call $\mu$ \emph{infinitely divisible} with respect to $\odot$ iff $\mu$ is divisible by any $k\in\nset$.
\end{dfn}

We give some examples of infinitely divisible measures with respect to $\odot$.

\begin{exm}[$e^{tA}$ with $A = x\partial_x$]\label{exm:xD}
The operator $e^{tx\partial_x}$ with $t\in\rset$ is given by
\[e^{tx\partial_x}x^k = e^{tk}x^k,\]
for all $k\in\nset_0$ and $\lambda_t = (e^{tk})_{k\in\nset_0}$ is a moment sequence represented by $\mu_t = \delta_{e^t}$.
The measure $\mu_t$ is infinitely divisible with respect to $\odot$ since
\[\mu_{t_1}\odot \mu_{t_2} = \delta_{e^{t_1}} \odot \delta_{e^{t_2}} = \delta_{e^{t_1}\cdot e^{t_2}} = \delta_{e^{t_1+t_2}} = \mu_{t_1+t_2}\]
holds by \Cref{lem:odotProps} (\ref{item:last2}) for all $t_1,t_2\in\rset$.
\exmsymbol
\end{exm}

\begin{exm}[$e^{tA}$ with $A=(x\partial_x)^2$]\label{exm:xDsquare}
The operator $e^{t(x\partial_x)^2}$ with $t\in [0,\infty)$ is given by
\[e^{t(x\partial_x)^2} x^k = e^{tk^2} x^k\]
for all $k\in\nset_0$ and $\lambda_t = (e^{tk^2})_{k\in\nset_0}$ is a moment sequence represented by $\mu_t$ with
\[\diff\mu_t(x) := \chi_{(0,\infty)}\cdot \frac{1}{\sqrt{4\pi t}} \cdot x^{-1}\cdot \exp\left( -\frac{1}{4t}\cdot (\ln x)^2 \right) ~\diff x\]
for $t\in (0,\infty)$ and $\mu_0 = \delta_1$.
For $t=\frac{1}{2}$ this is the first explicit indeterminate moment sequence $\lambda_{\frac{1}{2}} = (e^{\frac{1}{2} k^2})_{k\in\nset_0}$ \cite[pp.\ J.106--J.107, §56]{stielt94}.
The measures $\mu_t$ are infinitely divisible since
\[\mu_{t_1}\odot\mu_{t_2} = \mu_{t_1+t_2}\]
holds for all $t_1,t_2\in [0,\infty)$.
\exmsymbol
\end{exm}

\begin{exm}
Let $n\in\nset$ and $\nu$ be a measure on $\rset^n$ with $\nu(\rset^n)<\infty$.
Then $\nu^{\odot k}$ is a measure on $\rset^n$ by \Cref{lem:odotProps}.
Then for any $t\in [0,\infty)$ we have that
\[e^{\odot t\nu} := \sum_{k\in\nset_0} \frac{t^k\cdot \nu^{\odot k}}{k!}\]
is an infinitely divisible measure on $\rset^n$ since
\[e^{\odot t_1\nu} \odot e^{\odot t_2\nu} = e^{\odot (t_1+t_2)\nu}\]
holds for all $t_1,t_2\in [0,\infty)$.
If all moments
\[s_\alpha := \int_{\rset^n} x^\alpha~\diff\nu(x)\]
of $\nu$ with $\alpha\in\nset_0^n$ exist then
\[\int_{\rset^n} x^\alpha~\diff (e^{\odot t\nu})(x) = e^{t\cdot s_\alpha}\]
holds for all $\alpha\in\nset_0^n$ and $t\in [0,\infty)$.
\exmsymbol
\end{exm}

L\'evy measures $\nu$ fulfill $\nu(\{0\}) = 0$.
The next two examples are devoted to the cases of measures with $\nu(\{0\}) > 0$.
At first for the additive convolution $*$ and then for the multiplicative convolution $\odot$.

\begin{exm}\label{exm:nu0neq0Add}
Let $n\in\nset$ and $t\in [0,\infty)$ (or all $\rset$).
Since $\delta_0 * \mu = \mu$ holds for all measures $\mu$ on $\rset^n$, i.e., $\delta_0^{*k} = \delta_0$ for all $k\in\nset_0$, we have that 
\[e^{*t\delta_0} = \sum_{k\in\nset_0} \frac{t^k}{k!}\cdot \delta_0^{*k} = e^t\cdot\delta_0 \qquad\text{and}\qquad e^{*(t\delta_0+\mu)} = e^{*t\delta_0}*e^{*\mu}=e^t\cdot e^{*\mu}\]
hold for all $t\in\rset$.
For L\'evy processes one therefore needs L\'evy measures $\nu$ to fulfill $\nu(\{0\}) = 0$ to ensure that the L\'evy process remains a probability distribution, i.e., $e^{*t\nu}(\rset^n) = 1$ for all $t\in [0,\infty)$.
If $\nu(\{0\}) \neq 0$ then this induces a scaling of the measure.
That is not allowed in probability theory but for moment measures it is allowed since the set of moment sequences is a cone.
\exmsymbol
\end{exm}

\begin{exm}\label{exm:nu0neq0Mult}
Let $n\in\nset$ and $t\in [0,\infty)$ (or all $\rset$).
Since $\delta_0\odot\mu = \mu(\rset^n)\cdot\delta_0$ holds for all measures $\mu$ on $\rset^n$ by \Cref{lem:odotProps} (v), $\delta_\one\odot\mu = \mu$ holds by \Cref{lem:odotProps} (vi), and $\mu^{\odot 0} = \delta_{\one}$ holds by \Cref{dfn:odot} we have that 
\[e^{\odot t\delta_0} = \delta_\one + \sum_{k\in\nset} \frac{t^k}{k!}\cdot\delta_0 = \delta_\one + (e^t - 1)\cdot\delta_0\]
with moments
\[\int_{\rset^n} x^\alpha~\diff\!\left(e^{\odot t\delta_0}\right)\!(x) = \begin{cases}
e^t & \text{for}\ \alpha=0\\ 1 & \text{for}\ \alpha\neq 0.\end{cases}\]
Hence, we have $e^{\odot (t\delta_0 + \mu)} = e^{\odot t\delta_0}\odot e^{\mu}$ with moments
\[\int_{\rset^n} x^\alpha~\diff\!\left(e^{\odot (t\delta_0 + \mu)} \right)\!(x) = e^{\delta_{\alpha,0}\cdot t}\cdot \exp\left(\int_{\rset^n} x^\alpha~\diff\mu(x)\right)\]
where $\delta_{\alpha,0}$ is the Kronecker delta.
Hence, $e^{\odot (t\delta_0 + \mu)}$ is no probability measure for $t\neq 0$ but rescaling gives the probability measure
\[e^{-t}\cdot e^{\odot (t\delta_0 + \mu)} = e^{-t\delta_\one}\odot e^{\odot (t\delta_0+\mu)} = e^{\odot(t\delta_0 - t\delta_\one + \mu)}.\]
Since the set of moment measures resp.\ sequences is a cone scaling is allowed and hence we can work with $e^{\odot( t\delta_0 + \mu)}$ instead of $e^{\odot (t\delta_0 - t\delta_\one + \mu)}$, i.e., in the L\'evy measures $\nu$ we can allow $\nu(\{0\})>0$.
Additionally, $e^{\odot (t\delta_0 - t\delta_\one + \mu)}$ requires the in general signed measure $t\delta_0 - t\delta_\one + \mu$ which enormously complicates things.
\exmsymbol
\end{exm}

In summary, \Cref{exm:nu0neq0Add} and \ref{exm:nu0neq0Mult} show that for the additive and multiplicative convolution $*$ and $\odot$ we can not only allow $\nu(\{0\}) >0$ for the L\'evy measures $\nu$ but that this actually simplifies our treatment.
One has to pay special attention to $\nu(\rset^n)=\infty$ since $\delta_0\odot\nu = \nu(\rset^n)\cdot\delta_0$ holds by \Cref{lem:odotProps} (v).

The reader will of course recognize the resemblance between this discussion about L\'evy measures and (un)bounded operators.
For a bounded operator $B$ on a Banach or a Hilbert space
\[e^B = \sum_{k\in\nset_0} \frac{B^k}{k!}\]
is well defined.
For unbounded operators $H$ its exponential $e^H$ can also be well-defined, but not via the Taylor sum.
The same appears here with L\'evy measures $\nu$ in the bounded ($\nu(\rset^n)<\infty$) and unbounded ($\nu(\rset^n)=\infty$) case.

\section{Diagonal Operators and their Generators}%%%
%%%%%%%%%%%%%%%%%%%%%%%%%%%%%%%%%%%%%%%%%%%%%%%%%%%%
\label{sec:generators}

Recall the definitions
\[\fD := \left\{ T = \sum_{\alpha\in\nset_0^n} q_\alpha\cdot\partial^\alpha \;\middle|\; q_\alpha\in\rset[x_1,\dots,x_n]_{\leq |\alpha|},\alpha\in\nset_0^n,\ \text{and}\ \ker T = \{0\}\right\}\]
and
\[\fd := \left\{ \sum_{\alpha\in\nset_0^n} q_\alpha\cdot\partial^\alpha \;\middle|\; q_\alpha\in\rset[x_1,\dots,x_n]_{\leq |\alpha|},\alpha\in\nset_0^n\right\}.\]
from \cite[Def.\ 1.1]{didio24posPres2arxiv}.
For diagonal operators we therefore define the following.

\begin{dfn}\label{dfn:fDdfdd}
Let $n\in\nset$.
We define subsets $\fD',\fd'\subseteq\rset[[x_1\partial_1,\dots,x_n\partial_n]]$ by
\[\fD' := \left\{\begin{gathered} T: \rset[x_1,\dots, x_n] \to \rset[x_1,\dots, x_n]\\ \text{linear operator}\end{gathered}\ \middle|\ \begin{gathered} Tx^\alpha = t_\alpha x^\alpha\ \text{with}\\ t_\alpha > 0\ \text{for all}\ \alpha \in \nset_0^n\end{gathered}\right\}\]
and
\[\fd' := \left\{\begin{gathered} A: \rset[x_1,\dots, x_n] \to \rset[x_1,\dots, x_n]\\	\text{linear operator}\end{gathered}\ \middle|\ \begin{gathered} Ax^\alpha = a_\alpha x^\alpha\ \text{with}\\ a_\alpha \in \rset\ \text{for all}\ \alpha \in \nset_0^n\end{gathered}\right\}.\]
Furthermore, we define the set of \emph{diagonal positivity preservers} with strictly positive eigenvalues as
\[\fD_+' := \left\{T \in \fD'\ \middle|\ T \text{ is a positivity preserver}\right\}\]
and their \emph{generators} as
\[\fd_+':=\left\{A \in \fd'\ \middle|\ e^{tA} \in \fD_+' \text{ for all }t \geq 0\right\}.\]
\end{dfn}

Recall that $(\fD,\,\cdot\,)$ is a regular Fr\'echet Lie group with Fr\'echet Lie algebra $(\fd,\,\cdot\,,+)$, see \cite[Thm.\ 3.7]{didio24posPres2arxiv}.
For the definition of (regular) Fr\'echet Lie group see \cite[p.\ 63, Dfn.\ 1.1]{omori97} or e.g.\ \cite[Dfn.\ 2.10]{didio24posPres2arxiv}.

\begin{thm}\label{thm:LieSubGroup}
$(\fD',\,\cdot\,)$ is a commutative regular Fr\'echet Lie sub-group of $(\fD,\,\cdot\,)$ with the Fr\'echet Lie sub-algebra $(\fd',\,\cdot\,,+)$ of $(\fd,\,\cdot\,,+)$.
The exponential map
\[\exp:\fd'\to \fD',\quad A\mapsto \exp A := \sum_{k\in\nset_0} \frac{A^k}{k!}\]
is smooth, bijective, and fulfills
\[(\exp A) x^\alpha = e^{a_\alpha} x^\alpha\]
for all $A\in\fd'$, i.e., $Ax^\alpha = a_\alpha x^\alpha$ for all $\alpha\in\nset_0^n$.
Its inverse $\log:\fD'\to\fd'$ is smooth and explicitly given by
\[(\log T)x^\alpha = \ln t_\alpha\cdot x^\alpha\]
for all $T\in\fD'$, i.e., $Tx^\alpha = t_\alpha x^\alpha$ for all $\alpha\in\nset_0^n$.
\end{thm}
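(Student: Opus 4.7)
The plan is to exploit that every operator in $\fD'$ and $\fd'$ is simultaneously diagonalized in the monomial basis $\{x^\alpha\}_{\alpha\in\nset_0^n}$, which reduces all Lie-theoretic operations to coordinate-wise scalar operations on the diagonal sequences. Under the identifications $\fD'\leftrightarrow(0,\infty)^{\nset_0^n}$ and $\fd'\leftrightarrow\rset^{\nset_0^n}$ via $T\mapsto(t_\alpha)$ and $A\mapsto(a_\alpha)$, composition on $\fD'$ becomes the Hadamard product of sequences, addition on $\fd'$ is entry-wise, and the exponential becomes the entry-wise scalar exponential.

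First I would verify $\fD'\subseteq\fD$ and $\fd'\subseteq\fd$. By \Cref{rem:diagOpCoefficients} every diagonal operator has the representation $T=\sum_\alpha \frac{c_\alpha}{\alpha!}\cdot x^\alpha\partial^\alpha$, so its coefficient $q_\alpha=\frac{c_\alpha}{\alpha!}\cdot x^\alpha$ lies in $\rset[x_1,\dots,x_n]_{\leq|\alpha|}$, and for $T\in\fD'$ the condition $t_\alpha>0$ forces $\ker T=\{0\}$. The sub-group and sub-algebra structure is then immediate: $(T_1 T_2)x^\alpha=t^{(1)}_\alpha t^{(2)}_\alpha x^\alpha=(T_2 T_1)x^\alpha$ shows commutativity and closure under products (positive times positive stays positive), $\id\in\fD'$ with $t_\alpha\equiv 1$, inverses $T^{-1}x^\alpha=t_\alpha^{-1}x^\alpha$ exist since $t_\alpha>0$, and the Lie bracket on $\fd'$ vanishes.

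For the exponential, induction gives $A^k x^\alpha=a_\alpha^k x^\alpha$, so on each monomial the series $\exp(A)x^\alpha=\sum_{k\in\nset_0}\frac{a_\alpha^k}{k!}\cdot x^\alpha=e^{a_\alpha}x^\alpha$ reduces to an ordinary scalar series; extending by linearity to any $p\in\rset[x_1,\dots,x_n]$ (a finite sum of monomials) gives the formula. Bijectivity with the explicit inverse $(\log T)x^\alpha=\ln(t_\alpha)\cdot x^\alpha$ then follows from bijectivity of $\exp:\rset\to(0,\infty)$, with $t_\alpha>0$ being precisely what allows taking the logarithm. Smoothness of both maps reduces, via the product-space identifications above, to entry-wise smoothness of the scalar $\exp$ and $\ln$ in the product Fréchet topology.

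The main obstacle is verifying \emph{regularity} of $\fD'$ as a Fréchet Lie sub-group, compatible with the ambient structure of \cite[Thm.\ 3.7]{didio24posPres2arxiv}. Here commutativity of $\fD'$ trivializes matters: any smooth curve $A(\,\cdot\,)\in\fd'$ integrates entry-wise to the curve $T(t)\in\fD'$ defined by $T(t)x^\alpha=\exp\bigl(\int_0^t a_\alpha(s)\,\diff s\bigr)\cdot x^\alpha$, which is smooth in $t$ on each monomial and solves the defining logarithmic-derivative equation required for regularity. Smoothness of the inclusions $\fD'\hookrightarrow\fD$ and $\fd'\hookrightarrow\fd$ follows from the linear transformation between the sequences $(t_\alpha)$ and $(c_\alpha)$ recorded in \Cref{rem:diagOpCoefficients}.
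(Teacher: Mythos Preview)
Your proposal is correct and follows essentially the same approach as the paper: both reduce the Lie-theoretic structure to entry-wise scalar operations on the diagonal sequences via the identifications $\fD'\leftrightarrow(0,\infty)^{\nset_0^n}$ and $\fd'\leftrightarrow\rset^{\nset_0^n}$, and both deduce smoothness and bijectivity of $\exp$ and $\log$ from the corresponding scalar properties of $\exp:\rset\to(0,\infty)$ and $\ln$. Your version is considerably more explicit---in particular you spell out the sub-group axioms, the monomial computation of $\exp(A)x^\alpha$, and the regularity argument via the integrated curve $T(t)x^\alpha=\exp\bigl(\int_0^t a_\alpha(s)\,\diff s\bigr)x^\alpha$---whereas the paper's proof compresses all of this into a few sentences asserting closedness in the Fr\'echet topology and invoking the scalar $\exp$/$\ln$ bijection directly.
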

\begin{proof}
It is clear that $\fD'\subsetneq\fD$ and $\fd'\subsetneq\fd$ are commutative and closed in the Fr\'echet topology.
Hence, $\fd'\subseteq\fd$ is closed with respect to the Lie bracket.
Since every diagonal operator $T$ resp.\ $A$ is uniquely determined by their diagonal sequence $(t_\alpha)_{\alpha\in\nset_0^n}$ resp.\ $(a_\alpha)_{\alpha\in\nset_0^n}$ and $\exp:\rset\to (0,\infty)$ is smooth and bijective with smooth inverse $\ln:(0,\infty)\to\rset$ we have that $\exp:\fd'\to\fD'$ is smooth with smooth explicit inverse $\log:\fD'\to\fd'$.
\end{proof}

We can now fully characterize the generators of diagonal positivity preservers.

\begin{thm}\label{thm:diagonalGenerators}
Let $n\in\nset$ and let
\[A=\sum_{\alpha \in \nset_0^n} \frac{c_\alpha}{\alpha!}\cdot x^\alpha\cdot\partial^\alpha\]
be with $c_\alpha \in \rset$ for all $\alpha \in \nset_0^n$.
Then the following are equivalent:
\begin{enumerate}[(i)]
\item $A \in \fd_+'$, i.e., $e^{tA}$ is a diagonal positivity preserver for all $t \geq 0$.

\item There exist a constant $c_0\in\rset$, a vector $b = (b_1,\dots,b_n)^T \in \rset^n$, a real symmetric positive semi-definite matrix $\Sigma = (\sigma_{i,j})_{i,j=1}^n$, and a $\sigma$-finite measure $\nu$ on $\rset^n$ with
\[
\nu(\{0\}) = 0%,\quad
%
%\int_{\|x\|_2 \geq 1} |x_i| ~\diff\nu(x) < \infty,
%
\quad\text{and}\quad
\int_{\rset^n} |x^\alpha| ~\diff\nu(x) < \infty
\end{equation*}
for %all $i = 1,\dots, n$ and 
all $\alpha \in \nset_0^n$ with $|\alpha| \geq 2$ such that
\begin{alignat*}{2}
c_0			&\in\rset,\\
c_{e_i}		&= b_i + \int_{\|x\|_2 \geq 1} x_i ~\diff \nu(x) &\qquad& \text{for all}\ i = 1, \dots, n,\\
c_{e_i+e_j}	&= \sigma_{i, j} + \int_{\rset^n} x^{e_i+e_j} ~\diff\nu(x)	&& \text{for all}\ i, j = 1, \dots, n,
\intertext{and}
c_\alpha 	&= \int_{\rset^n} x^\alpha ~\diff\nu(x) && \text{for all}\ \alpha \in \nset_0^n\ \text{with}\ |\alpha| \geq 3.
\end{alignat*}
\end{enumerate}
\end{thm}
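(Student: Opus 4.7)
By Theorem \ref{thm:borceaDiag} combined with Remark \ref{rem:diagOpCoefficients}, the condition $A\in\fd_+'$ is equivalent to $(e^{ta_\beta})_{\beta\in\nset_0^n}$ being a moment sequence for every $t\geq 0$, where the diagonal sequence of $A$ is $a_\beta = \sum_{\alpha\preceq\beta} c_\alpha\binom{\beta}{\alpha}$. A short calculation using the binomial identity $\sum_{\alpha\preceq\beta}\binom{\beta}{\alpha}x^\alpha = (1+x)^\beta$ (with $(1+x)^\beta := \prod_i(1+x_i)^{\beta_i}$) shows that the integral conditions in (ii) are equivalent to the L\'evy--Khinchin-type expansion
\begin{equation*}
a_\beta = c_0 + \skl{b}{\beta} + \tfrac{1}{2}\beta^T\Sigma\beta - \tfrac{1}{2}\sum_{i=1}^n \sigma_{i,i}\beta_i + \int_{\rset^n}\bigl[(1+x)^\beta - 1 - \skl{\beta}{x}\chi_{\|x\|_2<1}(x)\bigr]\,\diff\nu(x),
\end{equation*}
so the task reduces to characterising those sequences $(a_\beta)_\beta$ whose Hadamard exponential $(e^{ta_\beta})_\beta$ is a moment sequence for every $t\geq 0$. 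Since $c_0$ contributes only the positive scalar factor $e^{tc_0}$, which trivially preserves positivity, I reduce to the case $c_0 = 0$; then $a_0 = 0$ and every representing measure $\mu_t$ is automatically a probability measure.

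For (ii) $\Rightarrow$ (i) I construct $\mu_t$ explicitly as the $\odot$-convolution of three pieces. The drift part $\skl{b}{\beta}$ is represented by the Dirac measure $\delta_{(e^{tb_1},\dots,e^{tb_n})}$. The Gaussian part is represented by the multivariate log-normal $E_*\,N\bigl(-\tfrac{t}{2}(\sigma_{1,1},\dots,\sigma_{n,n}),\,t\Sigma\bigr)$ with $E(y):=(e^{y_1},\dots,e^{y_n})$: its moments $\int x^\beta\,\diff(E_*N) = \exp\bigl(\skl{\beta}{m}+\tfrac{t}{2}\beta^T\Sigma\beta\bigr)$ match the desired degree-$2$ polynomial in $\beta$ via the standard Gaussian moment-generating function. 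The jump part is built as in Example \ref{exm:nu0neq0Mult}: split $\nu = \nu|_{\|x\|_2<1}+\nu|_{\|x\|_2\geq 1}$, noting that the hypothesis $\int x_i^2\,\diff\nu<\infty$ forces $\nu|_{\|x\|_2\geq 1}$ to be finite; then form the shifted multiplicative compound-Poisson measure $e^{\odot t\tilde\nu|_{\|x\|_2\geq 1}}$ with $\tilde\nu(B):=\nu(B-\one)$ for the far part, and a centred/compensated analogue for the near part (well-defined since $(1+x)^\beta - 1 - \skl{\beta}{x} = O(\|x\|_2^2)$ near the origin and $\nu$ integrates $x_i^2$ there). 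Taking the $\odot$-convolution of the three pieces and invoking Theorem \ref{thm:product} to identify the moments of $\odot$-convolutions with Hadamard products of moment sequences verifies $\int x^\beta\,\diff\mu_t = e^{ta_\beta}$.

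For (i) $\Rightarrow$ (ii), the hypothesis yields, for each $t\geq 0$, a probability measure $\mu_t$ on $\rset^n$ with $\int x^\beta\,\diff\mu_t = e^{ta_\beta}$; by Theorem \ref{thm:product} these form an $\odot$-semigroup $\mu_{t+s}=\mu_t\odot\mu_s$ (up to indeterminacy, which can be resolved by a standard consistency argument). A continuity-and-parity argument using $\mu_0 = \delta_\one$ and the $\odot$-roots $\mu_t = \mu_{t/k}^{\odot k}$ forces $\supp\mu_t\subseteq(0,\infty)^n$, so the pushforward $\rho_t := (\ln)_*\mu_t$ is a well-defined $*$-semigroup of probability measures on $\rset^n$ with moment-generating function $\int e^{\skl{\beta}{y}}\,\diff\rho_t(y) = e^{ta_\beta}$. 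Applying the additive L\'evy--Khinchin theorem to $\rho_t$, which has all exponential moments by hypothesis, yields a triplet $(b_0,\Sigma_0,\nu_0)$ on $\rset^n$; pushing $\nu_0$ forward under $y\mapsto e^y - \one$ gives the measure $\nu$ in (ii), and the integrability $\int|x^\alpha|\,\diff\nu < \infty$ for $|\alpha|\geq 2$ is inherited from $\nu_0$'s exponential moments. The cutoff discrepancy between $\chi_{\|y\|_2<1}$ and $\chi_{\|x\|_2<1}$ is absorbed into $b$ and $\Sigma$ via the Taylor expansion $\ln(1+x_i) = x_i - \tfrac{x_i^2}{2}+\dots$, which is $O(\|x\|_2^2)$ near zero and integrable against $\nu$. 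The main obstacle will be the rigorous justification of $\supp\mu_t\subseteq(0,\infty)^n$, required for the log-pushforward to be well-defined; it relies on weak continuity of $t\mapsto\mu_t$ at $0$ together with the observation that the multiplicative semigroup generated by points arbitrarily close to $\one$ remains in the connected component $(0,\infty)^n$.
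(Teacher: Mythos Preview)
Your route differs substantially from the paper's. The paper never constructs $\mu_t$ explicitly and never passes through a logarithmic pushforward; instead it observes that a diagonal operator is a special non-constant-coefficient operator with $q_\alpha(y)=\frac{c_\alpha}{\alpha!}y^\alpha$, so both directions reduce immediately to \Cref{thm:posGenNonConst}. For (i)$\Rightarrow$(ii) one evaluates that theorem at $y=\one$, where $y^\alpha c_\alpha=c_\alpha$. For (ii)$\Rightarrow$(i) one checks that for each $y\in(\rset\setminus\{0\})^n$ the triplet $\bigl((b_iy_i)_i,\ (y_iy_j\sigma_{i,j})_{i,j},\ \nu\odot\delta_y\bigr)$ is again a valid L\'evy triplet (Schur's theorem gives $\Sigma(y)\succeq 0$), feeds this back into \Cref{thm:posGenNonConst}, and handles coordinate zeros by a limit. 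Your explicit (ii)$\Rightarrow$(i) construction via the $\odot$-convolution of drift, log-normal, and compound-Poisson pieces is a legitimate alternative, though the ``centred/compensated analogue for the near part'' hides exactly the analysis that \Cref{thm:posGenNonConst} already packages.

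The genuine gap is in your (i)$\Rightarrow$(ii). The claim $\supp\mu_t\subseteq(0,\infty)^n$ is false in general, so the pushforward $\rho_t=(\ln)_*\mu_t$ does not exist. Take $n=1$, $c_0=0$, $b=0$, $\Sigma=0$, $\nu=\delta_{-2}$; this satisfies all hypotheses of (ii). Then $a_k=(-1)^k-1$, so $(e^{ta_k})_{k\in\nset_0}=(1,e^{-2t},1,e^{-2t},\dots)$ is represented by
\[
\mu_t \;=\; e^{-t}\cosh(t)\,\delta_{1}\;+\;e^{-t}\sinh(t)\,\delta_{-1},
\]
supported on $\{-1,1\}$ for every $t>0$. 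Your continuity-and-parity argument cannot exclude this: each $\mu_{t/k}$ already sits on $\{-1,1\}$ and $\{-1,1\}\cdot\{-1,1\}=\{-1,1\}$, so the $\odot$-semigroup property holds perfectly. The restriction $\supp\mu_t\subseteq(0,\infty)^n$ is exactly what separates the Stieltjes situation (\Cref{cor:tHochcStielt}) from the general one; imposing it would, after your shift $y\mapsto e^y-\one$, only recover L\'evy measures $\nu$ supported in $(-1,\infty)^n$, not on all of $\rset^n$ as the theorem requires.
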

\begin{proof}
(i) $\Rightarrow$ (ii): 
Since $A$ is a generator of a positive semi-group in the non-constant coefficient case we can apply \Cref{thm:posGenNonConst} ``(i) $\Rightarrow$ (ii)'' with scaling $A = c_0 + \tilde{A}$, i.e., $e^A = e^{c_0}\cdot e^{\tilde{A}}$.
With $y = \one$ we get $c_\alpha y^\alpha = c_\alpha$ and the assertion (ii) is then proved.

(ii) $\Rightarrow$ (i):
For $y \in (\rset\setminus\{0\})^n$ it follows that
\begin{alignat*}{2}
y^0c_0				&= c_0\in\rset,\\
y^{e_i} c_{e_i}		&= y_ib_i + \int_{||x||_2 \geq 1} x_i ~\diff(\nu \odot \delta_y)(x) &\quad& \text{for all}\ i = 1, \dots, n,\\
y^{e_i+e_j}c_{e_i+e_j}	&= y_i y_j\sigma_{i,j} + \int_{\rset^n} x^{e_i+e_j}~\diff(\nu \odot \delta_y)(x) &&  \text{for all}\ i,j = 1,\dots,n,
\intertext{and}
y^\alpha a_\alpha 	&= \int_{\rset^n} x^\alpha ~\diff(\nu \odot \delta_y)(x) && \text{for all}\ \alpha \in \nset_0^n\ \text{with}\ |\alpha| \geq 3.
\end{alignat*}
Here, $b(y) := (b_1 y_1, \dots, b_n y_n)^T \in \rset^n$ is a vector, $\Sigma(y) := (y\cdot y^T) \odot \Sigma$ is a positive semi-definite matrix as the Hadamard product of two positive semi-definite matrices (\Cref{thm:schur}) and $\nu_y := \nu \odot \delta_y$ is a measure on $\rset^n$ with
\[\int_{\|x\|_2 \geq 1} |x_i| ~\diff(\nu \odot \delta_y)(x) = |y_i|\cdot\int_{\|x\|_2 \geq 1} |x_i| ~\diff\nu(x) < \infty\]
and
\[\int_{\rset^n} |x^\alpha| ~\diff(\nu \odot \delta_y)(x)	= |y^\alpha|\cdot \int_{\rset^n} |x^\alpha| ~\diff\nu(x) < \infty.\]
Hence, $A_y$ is a generator of a positivity preserving semi-group.

If $y_i = 0$ for some $i=1,\dots,n$ then we find $y^{(k)}\in (\rset\setminus\{0\})^n$ with $y^{(k)}\to y$ as $k\to\infty$.
Since the set of constant coefficient generators is closed in the Fr\'echet topology \cite{didio24posPresConst} and the polynomial coefficients of $A$ are continuous, we have that $A_{y^{(k)}}\to A_y\in\fd_+'$. 
Hence, \Cref{thm:posGenNonConst} is applicable again and it follows
\[A = \sum_{\alpha \in \nset_0^n} \frac{c_\alpha}{\alpha!}\cdot x^\alpha\partial^\alpha \in \fd_+.\qedhere\]
\end{proof}

Since we characterized positivity preserving diagonal operators and their generators we can now apply these results to moment sequences.
The following was already defined by Tyan \cite[Ch.\ 4]{tyan75}.

\begin{dfn}
Let $n\in\nset$ and $t=(t_\alpha)_{\alpha\in\nset_0^n}$ be a moment sequence with $t_\alpha>0$ for all $\alpha\in\nset_0^n$.
The moment sequence $t$ is called \emph{infinitely divisible with respect to $\odot$} if $t^c :=(t_\alpha^c)_{\alpha\in\nset_0^n}$ is a moment sequence for all $c>0$.
\end{dfn}

Infinitely devisible moment sequences and diagonal operators are connected through the following.

\begin{lem}\label{lem:t_inf_div_iff_A_in_fd+}
Let $n\in\nset$ and let $t = (t_\alpha)_{\alpha \in \nset_0^n}$ be a moment sequence with $t_\alpha > 0$ for all $\alpha \in \nset_0^n$. 
Let $A: \rset[x_1,\dots,x_n] \to \rset[x_1,\dots,x_n]$ be the linear operator defined by 
\[Ax^\alpha = \ln t_\alpha\cdot x^\alpha\]
for all $\alpha \in \nset_0^n$. 
Then the following are equivalent:
\begin{enumerate}[(i)]
\item The moment sequence $t$ is infinitely divisible with respect to $\odot$.

\item $A \in \fd_+'$, i.e. $e^{cA}$ is a positivity preserver for all $c \geq 0$.
\end{enumerate}
\end{lem}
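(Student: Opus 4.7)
The plan is to observe that the equivalence is essentially a tautology once one identifies the diagonal sequence of $e^{cA}$ and invokes \Cref{thm:borceaDiag}. Concretely, by \Cref{thm:LieSubGroup} the exponential acts diagonally as
\[
(e^{cA}) x^\alpha \;=\; e^{c\cdot \ln t_\alpha}\, x^\alpha \;=\; t_\alpha^c\, x^\alpha
\]
for all $\alpha\in\nset_0^n$ and $c\geq 0$. So $e^{cA}$ is the diagonal operator with diagonal sequence $t^c = (t_\alpha^c)_{\alpha\in\nset_0^n}$, which has strictly positive entries because $t_\alpha>0$; in particular $e^{cA}\in\fD'$ automatically, so the only remaining issue is whether $e^{cA}$ is a positivity preserver.

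For (i) $\Rightarrow$ (ii), assume $t$ is infinitely divisible with respect to $\odot$. Then $t^c$ is a moment sequence for every $c>0$, and trivially $t^0=(1)_{\alpha}$ is a moment sequence as well (represented by $\delta_{\one}$). Since $e^{cA}$ has diagonal sequence $t^c$, \Cref{thm:borceaDiag} (ii) $\Rightarrow$ (i) gives that $e^{cA}$ is a positivity preserver. Combined with $t_\alpha^c>0$, this places $e^{cA}\in\fD_+'$ for every $c\geq 0$, which by \Cref{dfn:fDdfdd} means $A\in\fd_+'$.

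For the converse (ii) $\Rightarrow$ (i), assume $A\in\fd_+'$. Then for every $c\geq 0$ the operator $e^{cA}$ is a diagonal positivity preserver, and its diagonal sequence is $t^c$. Applying \Cref{thm:borceaDiag} (i) $\Rightarrow$ (ii) we conclude that $t^c$ is a moment sequence for every $c\geq 0$. Hence $t$ is infinitely divisible with respect to $\odot$.

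There is no real obstacle here: the nontrivial content of the statement has already been absorbed into \Cref{thm:borceaDiag} (which turns the semi-group property of $e^{cA}$ into a moment condition on $t^c$) and \Cref{thm:LieSubGroup} (which computes the exponential explicitly via the eigenvalues $\ln t_\alpha$). The only point worth highlighting is that strict positivity $t_\alpha>0$ is used both to guarantee that $\ln t_\alpha$ is well defined, so that $A$ itself makes sense as an element of $\fd'$, and to ensure that $t_\alpha^c>0$, so that $e^{cA}$ actually lies in $\fD_+'$ and not just in the larger class of positivity-preserving diagonal operators with possibly vanishing eigenvalues.
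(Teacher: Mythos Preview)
Your proof is correct and follows essentially the same approach as the paper: both compute the diagonal sequence of $e^{cA}$ via \Cref{thm:LieSubGroup} and then invoke \Cref{thm:borceaDiag} in each direction. Your version is slightly more explicit about the trivial case $c=0$ and the role of strict positivity, but the argument is the same.
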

\begin{proof}
(i) $\Rightarrow$ (ii):
Using the exponential map from \Cref{thm:LieSubGroup} it follows that 
\[
	\exp(cA)x^\alpha = e^{c\ln t_\alpha}x^\alpha = t_\alpha^c x^\alpha
\]
holds for all $\alpha \in \nset_0^n$ and $c \geq 0$.
Since $t^c$ is a moment sequence for all $c \geq 0$ and using \Cref{thm:borceaDiag} it follows that $\exp(cA)$ is a positivity preserver for all $c \geq 0$, i.e., $A \in \fd_+'$ by \Cref{dfn:fDdfdd}.

(ii) $\Rightarrow$ (i):
Similarly, since $\exp(cA)$ is a positivity preserver for all $c \geq 0$, using \Cref{thm:borceaDiag} gives a representing measure $\mu_c$ of $t^c$ for each $c \geq 0$.
Hence, $t^c$ is a moment sequence for all $c \geq 0$ and $t$ is infinitely divisible.
\end{proof}

We can now use the connection between infinitely divisible moment sequences and diagonal positivity preservers to characterize infinitely divisible moment sequences without the L\'evi--Khinchin formula for $\odot$-divisible measures.

\begin{thm}\label{thm:infDivRn}
Let $n\in\nset$ and let $t = (t_\alpha)_{\alpha\in\nset_0^n}$ be a moment sequence.
Then the following are equivalent:
\begin{enumerate}[(i)]
\item The moment sequence $t$ is infinitely divisible with respect to $\odot$.

\item There exist a constant $c_0 \in \rset$, a vector $b = (b_1, \dots, b_n)^T \in \rset^n$, a real symmetric positive semi-definite matrix $\Sigma = (\sigma_{i, j})_{i, j = 1}^n \succeq 0$, and a $\sigma$-finite measure $\nu$ on $\rset^n$ with
\[
\nu(\{0\})=0 %,\quad
%
%\int_{\|x\|_2 \geq 1} |x_i| ~\diff\nu(x) < \infty,
%
\quad\text{and}\quad
\int_{\rset^n} |x^\alpha| ~\diff\nu(x) < \infty
\]
for %all $i = 1, \dots, n$ and 
all $\alpha \in \nset_0^n$ with $|\alpha| \geq 2$ such that 
\[
	t_\alpha = e^{C_\alpha + I_\alpha}
\]
holds with
\begin{align}
C_\alpha &:= c_0 + \sum_{i = 1}^n \alpha_ib_i 
	+ \sum_{\substack{i, j=1\\i \neq j}}^n \alpha_i\alpha_j\sigma_{i, j}
	+ \frac{1}{2}\sum_{i=1}^n \alpha_i(\alpha_i-1)\sigma_{i, i}\label{eq:CalphaDef}
\intertext{and}
I_\alpha
&:= \int_{\rset^n} (x+\one)^\alpha - 1
- \sum_{i = 1}^n \alpha_i\cdot x_i\cdot \chi_{\{\|x\|_2 < 1\}} ~\diff \nu(x)\label{eq:IalphaDef}
\end{align}
for all $\alpha \in \nset_0^n$.	
\end{enumerate}
\end{thm}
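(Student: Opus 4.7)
The plan is to pipeline two previously established results. By \Cref{lem:t_inf_div_iff_A_in_fd+}, the infinite $\odot$-divisibility of $t$ is equivalent to membership $A\in\fd_+'$ of the diagonal operator $A$ determined by $Ax^\alpha = \ln t_\alpha\cdot x^\alpha$ (this tacitly requires $t_\alpha>0$, which is forced in (ii) by the exponential form $t_\alpha=e^{C_\alpha+I_\alpha}$). In turn, \Cref{thm:diagonalGenerators} characterizes such generators through a quadruple $(c_0,b,\Sigma,\nu)$ via the coefficients $c_\alpha$ of the $x^\alpha\partial^\alpha$-representation $A = \sum_{\alpha\in\nset_0^n}\frac{c_\alpha}{\alpha!}\cdot x^\alpha\partial^\alpha$. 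So the task reduces to translating the data $(c_\alpha)_{\alpha\in\nset_0^n}$ into a closed expression for $\ln t_\alpha$.

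The bridge between the diagonal eigenvalues $a_\alpha=\ln t_\alpha$ and the coefficients $c_\alpha$ is the conversion formula from \Cref{rem:diagOpCoefficients}, namely
\[
\ln t_\alpha \;=\; a_\alpha \;=\; \sum_{\beta\preceq\alpha}\binom{\alpha}{\beta}\cdot c_\beta.
\]
Substituting the explicit L\'evy-type forms for $c_\beta$ from \Cref{thm:diagonalGenerators} and splitting the resulting sum into its purely deterministic part (involving $c_0$, $b$ and $\Sigma$) and its $\nu$-integral part will produce the decomposition $\ln t_\alpha = C_\alpha + I_\alpha$.

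The crucial identity recasting the integral part into the form \eqref{eq:IalphaDef} is the coordinate-wise binomial expansion
\[
(x+\one)^\alpha \;=\; \sum_{\beta\preceq\alpha}\binom{\alpha}{\beta}\cdot x^\beta,
\]
which holds since $\one^{\alpha-\beta}=1$. Subtracting the $\beta=0$ term gives $\sum_{\beta\neq 0}\binom{\alpha}{\beta}x^\beta$, and further subtracting the linear truncation $\sum_i\alpha_i x_i\cdot\chi_{\{\|x\|_2<1\}}$ exactly matches the fact that $c_{e_i}$ in \Cref{thm:diagonalGenerators} contains only the tail integral $\int_{\|x\|_2\geq 1}x_i\,\diff\nu$ rather than an integral over all of $\rset^n$. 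After this reconciliation, the remaining deterministic summands --- namely $c_0$, the $\alpha_i b_i$-contributions, and the $\binom{\alpha}{e_i+e_j}\sigma_{i,j}$-contributions for $|\beta|=2$ --- reorganize by direct expansion of the multinomial coefficients into the claimed $C_\alpha$.

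The main obstacle is bookkeeping and the justification for exchanging the finite sum $\sum_{\beta\preceq\alpha}$ with $\int_{\rset^n}\,\diff\nu$. The hypothesis $\int |x^\alpha|\,\diff\nu<\infty$ for $|\alpha|\geq 2$ is tailor-made to permit this exchange and to guarantee that $I_\alpha$ is well defined; the $\chi_{\{\|x\|_2<1\}}$-cutoff on the linear piece is indispensable because $\nu$ need not integrate $x_i$ near the origin (only $\min(\|x\|_2^2,1)$ is controlled there in the spirit of a L\'evy measure). The reverse direction (ii)$\Rightarrow$(i) runs the same chain backwards: one defines $c_\alpha$ from the given data via the formulas in \Cref{thm:diagonalGenerators}, obtains $A\in\fd_+'$ and hence positivity preservers $e^{cA}$ for every $c\geq 0$, and invokes \Cref{lem:t_inf_div_iff_A_in_fd+} to conclude that $t$ is infinitely $\odot$-divisible.
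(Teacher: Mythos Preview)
Your proposal is correct and follows essentially the same route as the paper's own proof: both combine \Cref{lem:t_inf_div_iff_A_in_fd+} with \Cref{thm:diagonalGenerators}, apply the conversion formula $\ln t_\alpha = \sum_{\beta\preceq\alpha}\binom{\alpha}{\beta}c_\beta$ from \Cref{rem:diagOpCoefficients}, and then use the multi-binomial expansion $(x+\one)^\alpha=\sum_{\beta\preceq\alpha}\binom{\alpha}{\beta}x^\beta$ to reorganize the integral terms into $I_\alpha$ while the remaining deterministic terms assemble into $C_\alpha$.
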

\begin{proof}
By \Cref{thm:diagonalGenerators} and \Cref{lem:t_inf_div_iff_A_in_fd+} we have that (i) is equivalent to have an operator $A \in\fd_+'$ with
\[Ax^\alpha = \ln t_\alpha \cdot x^\alpha\]
for all $\alpha \in \nset_0^n$, a constant $c_0\in\rset$, a vector $b = (b_1,\dots,b_n)^T \in \rset^n$, a real symmetric positive semi-definite matrix $\Sigma = (\sigma_{i,j})_{i,j=1}^n \succeq 0$, and a $\sigma$-finite measure $\nu$ on $\rset^n$ with	
\[\nu(\{0\})=0 %, \quad
%\int_{\|x\|_2 \geq 1} |x_i| ~\diff\nu(x) < \infty,
%
\quad\text{and}\quad
\int_{\rset^n} |x^\alpha| ~\diff\nu(x) < \infty\]
for %all $i = 1,\dots, n$ and 
all $\alpha \in \nset_0^n$ with $|\alpha| \geq 2$ such that 
\[A = \sum_{\alpha \in \nset_0^n} \frac{c_\alpha}{\alpha!}\cdot x^\alpha\partial^\alpha\]
with 
\begin{alignat*}{2}
c_0	&\in\rset,\\
c_{e_i}		&= b_i + \int_{\|x\|_2 \geq 1} x_i ~\diff \nu(x) &\qquad& \text{for all}\ i = 1, \dots, n,\\
c_{e_i+e_j}	&= \sigma_{i, j} + \int_{\rset^n} x^{e_i+e_j} ~\diff\nu(x)	&& \text{for all}\ i, j = 1, \dots, n,
\intertext{and}
c_\alpha 	&= \int_{\rset^n} x^\alpha ~\diff\nu(x) && \text{for all}\ \alpha \in \nset_0^n\ \text{with}\ |\alpha| \geq 3
%\label{eq:coef_generator_end}
\end{alignat*}
holds.
By \Cref{rem:diagOpCoefficients} we have
\begin{align*}
\ln t_\alpha = \sum_{\beta \preceq \alpha} \binom{\alpha}{\beta}\cdot c_\beta
&= c_0 + \sum_{i = 1}^n \alpha_ib_i 
			+ \sum_{\substack{i, j=1\\i \neq j}}^n \alpha_i\alpha_j\sigma_{i, j}
			+ \frac{1}{2}\sum_{i=1}^n \alpha_i(\alpha_i-1)\sigma_{i, i}
\\		&\qquad
			+ \sum_{i = 1}^n \int_{\|x\|_2 \geq 1} \alpha_i x_i ~\diff\nu(x)
			+ \sum_{\substack{\beta \preceq \alpha\\ |\beta| \geq 2}} \binom{\alpha}{\beta} \int_{\rset^n} x^\beta ~\diff \nu(x)
\end{align*}
for all $\alpha \in \nset_0$. Defining $C_\alpha$ as in (\ref{eq:CalphaDef}) and using the multi-binomial theorem
\[
	(x+\one)^\alpha = \sum_{\beta \preceq \alpha} \binom{\alpha}{\beta}x^\beta
	= 1 + \sum_{i = 1}^n \alpha_i x_i 
	+ \sum_{\substack{\beta \preceq \alpha\\ |\beta| \geq 2}} \binom{\alpha}{\beta} x^\beta
\]
inside the integral it follows that
\begin{align*}
	\ln t_\alpha 
	&= C_\alpha + \int_{\rset^n} (x+\one)^\alpha - 1 + \sum_{i = 1}^n \alpha_i\cdot x_i\cdot (\chi_{\{\|x\|_2 \geq 1\}}(x) - 1) ~\diff \nu(x)
\intertext{and replacing the characteristic funtion $\chi$ with $\|x\|_2\geq 1$ by $\|x\|_2< 1$ gives}
	&= C_\alpha + \int_{\rset^n} (x+\one)^\alpha - 1
		- \sum_{i = 1}^n \alpha_i\cdot x_i\cdot\chi_{\{\|x\|_2 < 1\}} ~\diff \nu(x)
\end{align*}
for all $\alpha \in \nset_0^n$.
With $I_\alpha$ defined as in (\ref{eq:IalphaDef}) we arrive at 
\[
	t_\alpha = e^{\ln t_\alpha} = e^{C_\alpha + I_\alpha}
\]
for all $\alpha \in \nset_0^n$, i.e., the operator $A$ with the measure $\nu$ is equivalent to (ii).
\end{proof}

For the special case $n=1$ we get the following result, compare with \cite[Thm.\ 4.2]{tyan75}.
Our approach is faster then in \cite[Thm.\ 4.2]{tyan75}.

\begin{cor}\label{cor:tHochcR}
Let $t = (t_k)_{k\in\nset_0}\subseteq (0,\infty)$.
Then the following are equivalent:
\begin{enumerate}[(i)]
\item $t^c$ is a moment sequence for all $c>0$.

\item There exist constants $a,b,c\in\rset$ with $a\geq 0$ and a $\sigma$-finite measure $\nu$ on $\rset$ with
\[\nu(\{0\}) = 0 \quad\text{and}\quad \int_\rset |x^i|~\diff\nu(x) < \infty\]
for all $i\in\nset$ with $i\geq 2$ such that
\[\ln t_k = c + b\cdot k + a\cdot k^2 + \int_\rset (x+1)^k -1 -k\cdot x\cdot \chi_{(-1,1)}(x)~\diff\nu(x)\]
holds for all $k\in\nset_0$.
\end{enumerate}
\end{cor}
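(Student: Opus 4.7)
The plan is to deduce \Cref{cor:tHochcR} immediately from \Cref{thm:infDivRn} specialized to $n=1$, the only nontrivial step being an elementary regrouping of the polynomial part.

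First I would apply \Cref{thm:infDivRn} to the sequence $t$ with $n=1$. Multi-indices $\alpha \in \nset_0^n$ collapse to scalars $k \in \nset_0$, the vector $b$ becomes a single entry $b_1 \in \rset$, and the positive semi-definite matrix $\Sigma$ reduces to one nonnegative number $\sigma_{1,1} \geq 0$. Since there are no off-diagonal indices in the $n=1$ case, formula (\ref{eq:CalphaDef}) becomes
\[
C_k = c_0 + k\cdot b_1 + \tfrac{1}{2}k(k-1)\sigma_{1,1},
\]
and, using $\|x\|_2 = |x|$ on $\rset$ so that $\chi_{\{\|x\|_2<1\}} = \chi_{(-1,1)}$, formula (\ref{eq:IalphaDef}) becomes
\[
I_k = \int_{\rset} (x+1)^k - 1 - k\cdot x\cdot \chi_{(-1,1)}(x)~\diff\nu(x),
\]
which is exactly the integral term in statement (ii) of the corollary.

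Next I would collect powers of $k$ in $C_k$. Expanding $\tfrac{1}{2}k(k-1)\sigma_{1,1} = \tfrac{\sigma_{1,1}}{2}k^2 - \tfrac{\sigma_{1,1}}{2}k$ gives
\[
C_k = c_0 + \bigl(b_1 - \tfrac{\sigma_{1,1}}{2}\bigr)k + \tfrac{\sigma_{1,1}}{2}k^2,
\]
so setting $c := c_0$, $b := b_1 - \tfrac{\sigma_{1,1}}{2}$ and $a := \tfrac{\sigma_{1,1}}{2}\geq 0$ puts $\ln t_k = C_k + I_k$ into the form asserted in (ii). Conversely, given data $(c,b,a,\nu)$ as in (ii), the assignments $c_0 := c$, $b_1 := b + a$ and $\sigma_{1,1} := 2a \geq 0$ produce admissible data for \Cref{thm:infDivRn}, which yields the reverse implication.

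There is essentially no hard step: both directions are an unpacking of the $n=1$ case of \Cref{thm:infDivRn} followed by the single linear substitution above. The only bookkeeping worth verifying is that the integrability hypothesis $\int_\rset |x^i|~\diff\nu(x) < \infty$ for $i\in\nset$ with $i\geq 2$ coincides with the multivariate condition $\int_{\rset^n}|x^\alpha|~\diff\nu(x) < \infty$ for $|\alpha|\geq 2$ when $n=1$, and that the condition $\nu(\{0\})=0$ is identical on both sides; neither requires an argument. Compared to \cite[Thm.\ 4.2]{tyan75}, the characterization is obtained without invoking a separate L\'evy--Khinchin formula, since \Cref{thm:infDivRn} already packaged that content through the generator description of \Cref{thm:diagonalGenerators}.
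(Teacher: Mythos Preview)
Your proposal is correct and follows exactly the paper's approach: specialize \Cref{thm:infDivRn} to $n=1$ and relabel the constants $(c_0,b_1,\sigma_{1,1})$ as $(c,b,a)$ via the affine substitution you wrote out. The paper's proof is the same one-line specialization, with the only additional remark being that $\int_\rset x^2\,\diff\nu<\infty$ forces $\int_{|x|\geq 1}|x|\,\diff\nu<\infty$ (so the integral $I_k$ is well-defined), which you implicitly inherit from \Cref{thm:infDivRn}.
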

\begin{proof}
Set $n=1$ in \Cref{thm:infDivRn}, adjust $c_0,b,\sigma$ in \Cref{thm:infDivRn} to $a,b,c$ here, and $\int_\rset x^2~\diff\nu(x) < \infty$ implies $\int_{|x|\geq 1} |x|~\diff\nu(x)$.
\end{proof}

For the Stieltjes case, i.e., on $(0,\infty)$, we get the following.

\begin{cor}\label{cor:tHochcStielt}
Let $t = (t_k)_{k \in \nset_0}\in\rset^{\nset_0}\setminus\{0\}$.
Then the following are equivalent:
\begin{enumerate}[(i)]
\item $t^c$ is a Stieltjes moment sequence for all $c > 0$.

\item There exist $a,b,c\in\rset$ with $a\geq 0$ and a $\sigma$-finite measure $\nu$ on $[0,\infty)$ with
\[
\nu(\{0\})=0
\quad\text{and}\quad
\int_0^\infty |x^i| ~\diff\nu(x) < \infty
\]
for all $i\in\nset$ with $i \geq 2$ such that 
\[
\ln t_k = c + b\cdot k + a\cdot k^2 + \int_0^\infty (x+1)^k - 1 - k\cdot x\cdot \chi_{[0,1)}(x) ~\diff \nu(x)
\]
holds for all $k \in \nset_0$.
\end{enumerate}
\end{cor}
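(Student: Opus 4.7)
The plan is to reduce to the $n=1$ case of \Cref{thm:infDivRn} and strengthen the support of the L\'evy measure from $\rset$ to $[0,\infty)$.

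For (ii) $\Rightarrow$ (i), given the data with $\nu$ supported on $[0,\infty)$, I would first note that $\chi_{[0,1)}(x) = \chi_{(-1,1)}(x)$ on $[0,\infty)$, so the integral formula coincides with the formula in \Cref{cor:tHochcR} with $\nu$ extended by zero to $\rset$. Hence $t^c$ is already a moment sequence for every $c > 0$ by \Cref{cor:tHochcR}. To upgrade to the Stieltjes case, I would trace the representing measure $\mu_c$ of $t^c$ implicitly built in the proof of \Cref{thm:infDivRn}: it arises as the $\odot$-exponential of the shifted L\'evy measure $\nu(\,\cdot\,-1)$ together with Gaussian and linear corrections, all sitting on $[0,\infty)$ when $\nu$ does. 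Since $[0,\infty)$ is closed under $\odot$ by \Cref{lem:odotProps}, the measure $\mu_c$ lives on $[0,\infty)$, so $t^c$ is a Stieltjes moment sequence.

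For (i) $\Rightarrow$ (ii), by \Cref{cor:tHochcR} there exist $a, b, c \in \rset$ and a $\sigma$-finite measure $\nu$ on $\rset$ satisfying the integral formula. The content to prove is that $\nu$ can be chosen concentrated on $[0,\infty)$. Because $t^c$ is Stieltjes for every $c > 0$, the diagonal operator $T_c : x^k \mapsto t_k^c x^k$ preserves $[0,\infty)$-positivity by the Stieltjes analog of \Cref{thm:borceaDiag}: the representation $(T_c p)(x) = \int_0^\infty p(xy)\,\diff\mu_c(y)$ against the $[0,\infty)$-supported representing measure sends $\pos([0,\infty))$ into itself. Consequently the operator $A \in \fd'$ with $A x^k = \ln t_k \cdot x^k$ generates a $[0,\infty)$-positivity preserving semi-group, and the Stieltjes analog of \Cref{thm:diagonalGenerators} furnishes a L\'evy measure on $[0,\infty)$ realizing the same coefficients $c_k$.

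The main obstacle is supplying the Stieltjes versions of \Cref{thm:borceaDiag} and especially \Cref{thm:diagonalGenerators}. These should follow by rerunning their proofs with $K = [0,\infty)$ substituted for $\rset^n$ throughout. In particular, the non-constant-coefficient characterization from \cite{didio24posPres2arxiv} applied with $K = [0,\infty)$ produces a L\'evy triple at every $y \in [0,\infty)$ with measure supported in $[0,\infty)$; evaluating at $y = 1$, where $\nu_y = \nu \odot \delta_1 = \nu$ by \Cref{lem:odotProps}, yields the desired $\nu$ on $[0,\infty)$ directly.
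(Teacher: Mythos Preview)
Your outline is essentially the same strategy the paper uses: both directions hinge on Stieltjes ($K=[0,\infty)$) versions of the machinery behind \Cref{thm:diagonalGenerators}, and the paper's proof is equally terse about this, simply pointing to ``including the restriction $\supp\mu\subseteq[0,\infty)$ in the proof of \cite[Main Thm.\ 4.7]{didio24posPresConst}'' and then invoking the standard fact that an infinitely $*$-divisible measure supported on $[0,\infty)$ has its L\'evy measure supported there as well. Your plan to rerun \Cref{thm:borceaDiag} and \Cref{thm:diagonalGenerators} with $K=[0,\infty)$ via \cite{didio24posPres2arxiv} amounts to the same thing unwound one layer further.

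One point to tighten: in (ii) $\Rightarrow$ (i) your description of the representing measure $\mu_c$ of $t^c$ as ``the $\odot$-exponential of the shifted L\'evy measure $\nu(\,\cdot\,-1)$ together with Gaussian and linear corrections'' is not accurate. The L\'evy measure $\nu$ in \Cref{thm:diagonalGenerators} enters through the \emph{additive} convolution structure of \Cref{thm:posGenConst}/\ref{thm:posGenNonConst}, not as a multiplicative $\odot$-exponential; there is no direct formula $\mu_c = e^{\odot c\,\nu(\cdot-1)}\odot(\text{corrections})$. The clean way to get $\supp\mu_c\subseteq[0,\infty)$ is exactly what you propose in the last paragraph: once the $K=[0,\infty)$ version of \Cref{thm:diagonalGenerators} is in hand, condition (ii) of the corollary gives $A\in\fd_+'$ for $[0,\infty)$-positivity, hence $e^{cA}$ preserves $\pos([0,\infty))$, and the Stieltjes analogue of \Cref{thm:borceaDiag} then forces the diagonal sequence $t^c$ to be a Stieltjes moment sequence. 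So drop the $\odot$-exponential heuristic and route both directions through the Stieltjes variants you already identified as the crux.
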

\begin{proof}
By including the restriction $\supp\mu\subseteq [0,\infty)$ in the proof of \cite[Main Thm.\ 4.7]{didio24posPresConst} we find that in (i) $t$ is represented by a Stieltjes measure $\mu$, i.e., supported on $[0,\infty)$, which is infinitely divisible with respect to $*$.
Since the infinitely $*$-divisible representing measure is supported on $[0,\infty)$ we have that also its L\'evy measure $\nu$ is supported on $[0,\infty)$ in \Cref{cor:tHochcR}.
\end{proof}

Note, the L\'evy measures $\nu$ in \Cref{thm:infDivRn}, \Cref{cor:tHochcR}, and \Cref{cor:tHochcStielt} are the L\'evy measures with respect to the additive convolution $*$ since we used \Cref{thm:posGenConst} resp.\ \ref{thm:posGenNonConst}.
Interestingly, going from \Cref{cor:tHochcR} to \ref{cor:tHochcStielt} one restricts the L\'evy measure $\nu$ from $\rset$ to $[0, \infty)$ but the contribution $a\cdot k^2$ with $a\geq 0$ remains.
In \Cref{exm:xDsquare} we have seen that $a\cdot k^2$ corresponds to
\[a\cdot (x\partial_x)^2 = a\cdot( x^2\partial_x^2 + x\partial_x),\]
i.e., the $\log$-normal distribution.
In \Cref{exm:xD} we found that $x\partial_x$ corresponds to $\delta_{e^{t}}$, $t\in\rset$.
In summary, while the L\'evy measure $\nu$ comes from the additive convolution $*$ in \Cref{thm:posGenConst} for
\[A = c + b\cdot\partial_x + a\cdot\partial_x^2 + \dots \tag{$n=1$}\]
the same L\'evy measure appears in \Cref{thm:infDivRn}, \Cref{cor:tHochcR}, and \Cref{cor:tHochcStielt} for
\[A = c + (b+a)\cdot x\partial_x + a\cdot x^2\partial_x^2 + \dots \tag{$n=1$}\]
for the multiplicative convolution $\odot$.

Both Berg \cite{berg04,berg05,berg07} and Tyan \cite{tyan75} have studied these infinitely $\odot$-divisible moment sequences before.
While Berg \cite{berg07} was focused on Stieltjes moment sequences giving \Cref{cor:tHochcStielt}, Tyan proved a more general case for all multivariate moment sequences giving \Cref{thm:infDivRn}. 
The proofs of \cite[Thm.\ 2.4]{berg07}, \cite[Thm.\ 4.2]{tyan75} and \cite[Main Thm.\ 4.11]{didio24posPresConst} all use measure semi-groups and their generators. 
While Berg and Tyan use the techniques from Hunt's theorem \cite{hunt56} to prove a L\'evy-Khinchine type formula on moment sequences, in \cite{didio24posPresConst} we used the differential operator sum representation and the known L\'evy-Khinchine representation. 
This describes the coefficients of the semi-group generating operator in the differential operator sum representation.
These coefficients are obtained in \Cref{thm:posGenNonConst} and in \Cref{thm:diagonalGenerators}. 
The coefficients from \Cref{thm:diagonalGenerators} could also have been obtained by using Tyan's result for the diagonal values and then \Cref{rem:diagOpCoefficients} to convert the diagonal values to the respective coefficients.

\section{Summary}%%%
%%%%%%%%%%%%%%%%%%%%
\label{sec:summary}

In this work we continue our studies about positivity preservers \cite{didio24posPresConst,didio24posPres2arxiv} and we focus here on diagonal operators since they possess properties the general positivity preservers do not possess.
At first we characterize in \Cref{thm:linS} all linear maps $S:\rset^{\nset_0^n}\to\rset^{\nset_0^n}$ which preserve $K$-moment sequences and hence we generalize \cite{carcoma17}.

In \Cref{sec:diagOp} we start our investigation of diagonal operators by looking at different types of representation of diagonal operators, i.e., we have the three representations
\[(\text{a})\ Tx^\alpha = t_\alpha x^\alpha, \quad (\text{b})\ T=\sum_{\alpha\in\nset_0^n} \frac{c_\alpha}{\alpha!}\cdot x^\alpha\cdot\partial^\alpha, \quad\text{and}\quad (\text{c})\ T=\sum_{\alpha\in\nset_0^n} \frac{d_\alpha}{\alpha!}\cdot (x\partial)^\alpha\]
and the sequences $(t_\alpha)_{\alpha\in\nset_0^n}$, $(c_\alpha)_{\alpha\in\nset_0^n}$, and $(d_\alpha)_{\alpha\in\nset_0^n}$ transform into each other by binomial coefficients and Stirling numbers, see \Cref{rem:diagOpCoefficients}.
Diagonal operators always have a representation (a) and (b).
In \Cref{exm:dalphaNonEx} we give a diagonal positivity preserver which can not be represented by type (c).

In \Cref{sec:prodDiag} we see that the Hadamard product $s\odot t = (s_\alpha\cdot t_\alpha)_{\alpha\in\nset_0^n}$ of two sequences corresponds to the product $ST$ of two diagonal operators and the multiplicative convolution $\mu\odot\nu$ of their representing measures.
This observation simplifies several previous proofs, e.g.\ by removing the Schur product arguments in \cite{belton22} and \cite{blekherman22arxiv}.
In fact, the simplification provides us with another proof of the \Cref{thm:schur} in \Cref{sec:schur}.

In \Cref{sec:infdivOdot} we deal with $\odot$-infinitely divisible measures.
These properties are used in \Cref{sec:generators} to prove \Cref{thm:diagonalGenerators}: the full characterization of generators $A$ of diagonal positivity preservers, i.e., $e^{tA}$ is a diagonal positivity preserver for all $t\geq 0$.
This characterization is used to give an alternative (and shorter) approach to characterize infinitely divisible moment sequences \cite{horn69b,horn69a,tyan75} and reveals the connection between infinitely divisible moment sequences and the generators of diagonal positivity preservers.

\section*{Funding}%%%
%%%%%%%%%%%%%%%%%%%%%

The authors and this project are supported by the Deutsche Forschungs\-gemein\-schaft DFG with the grant DI-2780/2-1 of PdD and the research fellowship of PdD at the Zukunfts\-kolleg of the University of Konstanz, funded as part of the Excellence Strategy of the German Federal and State Government.
\vspace{-.3cm}

%%\bibliographystyle{amsplain}
%\bibliographystyle{amsalpha}
%\bibliography{../../bibdata}

\newcommand{\etalchar}[1]{$^{#1}$}
\providecommand{\bysame}{\leavevmode\hbox to3em{\hrulefill}\thinspace}
\providecommand{\MR}{\relax\ifhmode\unskip\space\fi MR }
% \MRhref is called by the amsart/book/proc definition of \MR.
\providecommand{\MRhref}[2]{%
  \href{http://www.ams.org/mathscinet-getitem?mr=#1}{#2}
}
\providecommand{\href}[2]{#2}

\end{document}